\newcommand{\ubar}[1]{\underaccent{\bar}{#1}}
\newcommand{\R}{\mathbb{R}}
\newcommand{\Pp}{\mathscr{P}}
\renewcommand{\O}{\mathcal{O}}
\newcommand{\KL}{\mathcal{KL}}
\newcommand{\e}{\varepsilon}
\newcommand{\Perp}{\mathrm{PP}}
\renewcommand{\iint}{\int\hspace{-0.285cm}\int}
\DeclareMathOperator*{\argmin}{arg\,min}
\newtheorem{prop}[theorem]{Proposition}
\newtheorem{defn}[theorem]{Definition}
\numberwithin{equation}{section}
\setlist[enumerate]{leftmargin=.5in}
\setlist[itemize]{leftmargin=.5in}
\crefname{hypothesis}{Hypothesis}{Hypotheses}
\title{Large data limits and scaling laws for tSNE\thanks{Submitted to the editors DATE.
\funding{This work was funded by NSF DMS 2307971 and the Simons Foundation MP-TSM.}}}
\author{Ryan Murray\thanks{Department of Mathematics, North Carolina State University, Raleigh, NC, USA  (\email{rwmurray@ncsu.edu}).}
\and Adam Pickarski\thanks{Department of Mathematics, North Carolina State University, Raleigh, NC, USA (\email{appickar@ncsu.edu}.}}
\begin{document}

\maketitle

% REQUIRED
\begin{abstract}
This work considers large-data asymptotics for t-distributed stochastic neighbor embedding (tSNE), a widely-used non-linear dimension reduction algorithm. We identify an appropriate continuum limit of the tSNE objective function, which can be viewed as a combination of a kernel-based repulsion and an asymptotically-vanishing Laplacian-type regularizer. As a consequence, we show that embeddings of the original tSNE algorithm cannot have any consistent limit as $n \to \infty$. We propose a rescaled model which mitigates the asymptotic decay of the attractive energy, and which does have a consistent limit. \end{abstract}

% REQUIRED
\begin{keywords}
 Neighbor-Embeddings, Graph Laplacians, Dimension Reduction, Visualization, Calculus of Variations
\end{keywords}

% REQUIRED
\begin{MSCcodes}
68Q25, 68R10, 68U05
\end{MSCcodes}
\section*{Acknowledgments}
RM and AP thank Kevin Miller for useful discussion about early versions of this work.

\section{Introduction}
Nonlinear dimension reduction for data visualization is a critical aspect of modern data science pipelines, allowing for the interpretation of high-dimensional datasets. Arguably the most successful method in this endeavor is t-distributed Stochastic Neighbor Embedding (tSNE) \cite{van2008visualizing}.  This algorithm has been highly influential in many applied fields, for example in medical imaging \cite{jamieson2010exploring}, genomics \cite{kobak2019art}, marine biology \cite{cieslak2020t}, mechanical engineering \cite{jiang2022fault}, hyperspectral imaging \cite{melit2020unsupervised}, molecular dynamics \cite{spiwok2020time}, and agriculture \cite{luo2021visualization}.  It has optimized implementations in various standard data science libraries, and has over 47000 citations in google scholar (2024): it is clear that tSNE is a highly utilized and influential algorithm for dimension reduction.

The tSNE algorithm incorporates several different approaches to obtain desirable dimension reduction embeddings. At its core it minimizes a non-convex energy which balances between attraction and repulsion terms. It measures affinity between the original features by utilizing locally adaptive kernels, and measures the distance between embedded points using heavy tailed interactions. Finally, the algorithm utilizes a training schedule which emphasizes attraction and repulsion differently at different stages of the optimization: this is known as \emph{early exaggeration}. While tSNE performs quite well in many settings, the interaction between the various engineering decisions made in its design have made it difficult to interpret rigorously. Indeed, some work \cite{wattenberg2016how} has pointed out various pitfalls in the interpretation of the outputs of tSNE. Other work, which we outline in Section \ref{sec:related_works}, has sought to provide more interpretable theoretical foundations for specific parts of the algorithm, especially the early exaggeration scheme. The goal of this work is to provide further theoretical foundations for the optimization problem underpinning the tSNE algorithm, especially in the context of large data limits.

\section{Setup and Main Results}
\subsection{Problem Setup}
Let $\mu\in\Pp(\Omega)$ be a probability distribution on $\R^d$ with support on a bounded, $C^2$ domain $\Omega$. We assume that $\mu$ has bounded density function $\rho(x)$ with respect to the Lebesgue measure, i.e. $\mu(dx)=\rho(x)dx$ where $1/\rho^*<\rho(x)<\rho^*$ for all $x\in\Omega$.  Let $X_1,...,X_n$ be independent observations from the distribution $\mu$. Following \cite{van2008visualizing}, the tSNE algorithm models the likelihood that two observations $X_i$ $X_j$ are `neighbors', given our sample, by the formula
\begin{equation}\label{p_j|i}
p_{ij}:=\frac{p_{j|i}+p_{i|j}}{2n}\qquad p_{j|i}=\frac{\exp(-|X_i-X_j|^2/2\sigma_i^2)}{\sum_{k\ne i}\exp(-|X_i-X_k|^2/2\sigma_i^2)},
\end{equation}
where $p_{j|i}$ should be thought of as the conditional probability of observing $X_j$ as a neighbor of $X_i$. Here, $p_{j|i}$ is influenced both by the sampled data and by a bandwidth parameter $\sigma_i$ (see Section \ref{subsection:perplexity}). This results in a symmetric joint distribution $P_n=\{p_{ij}\}_{i,j=1}^n$ on $\{(X_i,X_j)\}_{ij=1}^n$ that models the probability of observing the pair $(X_i,X_j)$ from the data. 

In the lower-dimensional space, one constructs a joint distribution $Q_n=\{q_{ij}\}_{i,j=1}^n$ on $\{(y_i,y_j)\}_{ij=1}^n$ over a given configuration $y_1,...,y_n\in \R^m$ by setting
\begin{equation}\label{q_ij}
q_{ij}=\frac{(1+|y_i-y_j|^2)^{-1}}{\sum_{k\ne l}(1+|y_k-y_l|^2)^{-1}}.
\end{equation}
In this lower-dimensional representation, the kernel for the joint distribution $q_{ij}$ is chosen to be the Student t-distribution to address the `crowding problem'. The crowding problem was a challenge first identified in \cite{van2008visualizing} which essentially describes the difficulty that pairwise distances are generically very big when the intrinsic dimension $d$ is large. The t-distribution alleviates this problem by allowing a moderate distance in the high-dimensional space to be modeled by a much larger distance in the embedded space. This allows for a more dispersed representation of the data in the lower-dimensional space, a feature which contributes to the success of tSNE.

The objective of tSNE is to minimize the Kullback-Leibler divergence between the joint probability distributions $P_n$ and $Q_n$. For $n$ samples, this is formulated as: \[
\mathrm{KL}(P_n || Q_n)= \sum_{ij} p_{ij}\log\frac{p_{ij}}{q_{ij}},
\]with $p_{ii}=q_{ii}=0$. One then optimizes this energy over all possible configurations $\{y_1,...,y_n\}$. In the interest of studying large data limits, we can consider the related functional problem of minimizing this energy over some space of mappings $T:\R^d\to\R^m$:\begin{equation}
\label{discrete_functional_cost}
\mathrm{KL}_n(T)= \sum_{ij} p_{ij}\log\frac{p_{ij}}{q_{ij}(T)},\qquad q_{ij}(T):=\frac{(1+|T(X_i)-T(X_j)|^2)^{-1}}{\sum_{k\ne l}(1+|T(X_k)-T(X_l)|^2)^{-1}}.
\end{equation}
where we've abused notation since this energy, as a function of $T$, no longer has the interpretation of a KL divergence.
\subsection{tSNE as Attraction-Repulsion}\label{subsection:attraction_repulsion}
It is well-documented in the literature that tSNE may be viewed as an attraction-repulsion problem. Often this is presented in terms of the ``forces'' experienced by particles during gradient descent. We will derive a similar decomposition of the tSNE objective function in terms of an attraction and repulsion energy.%This perspective will be particularly useful to us, yet it is often presented as the balancing of attractive and repulsive forces during gradient descent. Our viewpoint however is more concerned with a variational study of tSNE and for that reason we choose to present this phenomenon in this light. 

 A short derivation in the appendix shows that by defining the quantities\begin{align*}
\mathrm{A}_n[T]&:=\frac{1}{n}\sum_{i=1}^n\frac{\sum_{j=1}^n\exp\Big(\tfrac{-|X_i-X_j|^2}{2\sigma_{i}^2}\Big)\log(1+|T(X_i)-T(X_j)|^2)}{\sum_{j=1}^n\exp\Big(\tfrac{-|X_i-X_j|^2}{2\sigma_{i}^2}\Big)}\\
\mathrm{R}_n[T]&:=\log\left(\frac{1}{n^2}\sum_{i=1}^n\sum_{j=1}^n\frac{1}{1+|T(X_i)-T(X_j)|^2}\right),
\end{align*}
 we can write $\mathrm{KL}_n(T)=\mathrm{D}_n+\mathrm{A}_n[T]+\mathrm{R}_n[T]$ where $\mathrm{D}_n$ is a purely data dependent term, meaning that $\mathrm{D}_n$ is independent of $T$. This formulation implies that minimization of the energy in \eqref{discrete_functional_cost} may be viewed as competition between attractive and repulsive energies, namely \begin{equation}
\argmin_{T:\R^d\to\R^m} \mathrm{KL}_n(T)=\argmin_{T:\R^d\to\R^m}\mathrm{A}_n[T]+\mathrm{R}_n[T].
 \end{equation}Heuristically, the attractive term fosters proximity among mappings, and aims to preserve intra-cluster relationships. On the other hand, the repulsive term induces a separation between different clusters. The interplay between these two terms ideally results in a well-partitioned embedding in $\R^m$.\begin{remark}
We notice here that for a fixed map $T:\R^d\to\R^m$ and point $x\in\R^d$, the summand of the attractive term may be viewed as a locally weighted average of $\log(1+|T(x)-T(X)|^2)$ and is not unlike the Nadaraya-Watson regressor. We observe that this term is minimized as $T\to 0$.

We also notice that the repulsive term is reminiscent of a Coulomb energy, with the notable absence of a singularity when $T(X_i)=T(X_j)$. Without the singularity, it has been observed that the repulsive kernel can be represented well by low rank approximations \cite{Linderman2017EfficientAF}. It is straightforward to check that the repulsive term decreases without bound as $T\to \infty$. 
\end{remark}

\subsection{Perplexity and Bandwidth}\label{subsection:perplexity}
From now on, to make explicit the dependence on all the samples for the choice of bandwidth, we will write $\sigma_{i,n}$ for the bandwidth at $X_i$ for a dimension reduction problem of $n$ samples. In the proceeding discussion, we will also make explicit the dependence of the bandwidth in the conditional probabilities by writing $p_{j|i}(\sigma_{i,n})$.

The aforementioned bandwidths are selected to maintain a constant perplexity. Perplexity, which can be understood as a smoothed measure of the number of neighbors, is typically defined as the base 2 exponential of the Shannon entropy of the conditional distribution $p_{\cdot|i}$, measured in bits \cite{NIPS2002_6150ccc6, van2008visualizing}. Contrary to entropy, perplexity is completely independent of the base, so for mathematical convenience we will use base $e$. To this end, define the perplexity of the conditional distribution $p_{\cdot | i}$ as: \begin{equation}\label{Perp1}
\Perp_n(X_i |\sigma_{i,n}):=\exp\left(-\sum_{k\ne i}^n p_{k|i}(\sigma_{i,n})\log(p_{k|i}(\sigma_{i,n}))\right).
\end{equation}

The bandwidth $\sigma_{i,n}$ is then chosen, for every $i$, so that the perplexity $\Perp_n(X_i) = \kappa$ for all $i$. In essence, by choosing a fixed perplexity, tSNE can adaptively tune the bandwidth parameter $\sigma_{i,n}$ for each observation $X_i$ to account for the local density variations. Conceptually, this process seeks an appropriate bandwidth that ensures the desired `effective number of neighbors'. If the data points are widely dispersed around $X_i$, a larger bandwidth is required to encompass the specified number of neighbors. Moreover, as the sample size grows indefinitely, the count of observations within any finite region also increases without bound, regardless of the underlying density. This necessitates progressively smaller values of $\sigma_{i,n}$ to conserve the defined number of neighbors. 

This adaptive bandwidth strategy in tSNE to model the joint distribution $P_n$ is akin to the approach used in adaptive kernel density estimation. In the constant bandwidth case, it is well known that for a kernel $K:\R^d\to\R$, the quantity\begin{equation}\label{KDE}
\frac{1}{nh_n^d}\sum_{i=1}^n K\left(\frac{x-X_i}{h_n}\right)
\end{equation}is a pointwise consistent estimator of the density $\rho(x)$ as long as $nh_n^d/\log(n)\to\infty$.

In fact, this exact scaling of the bandwidths may be realized as a consequence from holding perplexity (almost) constant. We may see this from a brief derivation in the appendix which shows that equation \eqref{Perp1} may be rewritten as\[
\Perp_n(X_i |\sigma_{i,n})=\left(\sum_{k=1}^n\exp\left(-\frac{|X_i-X_k|^2}{2\sigma_{i,n}^2}\right)\right)\exp\left(\frac{\sum_{k=1}^n \frac{|X_i-X_k|^2}{2\sigma_{i,n}^2}\exp\Big(\frac{-|X_i-X_k|^2}{2\sigma_{i,n}^2}\Big)}{\sum_{k=1}^n\exp\Big(\frac{-|X_i-X_k|^2}{2\sigma_{i,n}^2}\Big)}\right).\] 
In this formula, we notice that we could replace $X_i$ with a generic $x$ and $\sigma_{i,n}$ with a function $\sigma(x)$. %\rwm{TODO: I added this sentence so that Prop 2.2 made sense, but I'm not sure if this is the best way to state that, especially w.r.t. $\sigma$.}

With an application of Hoeffding's inequality, it is straightforward to show that in order to maintain a constant perplexity, one must let $\sigma_{i,n}=\O(n^{-1/d})$ with high probability. However it is well known in the kernel density estimation literature that this particular scaling will not facilitate consistency of the estimator defined in \eqref{KDE}. In order to leverage results from the study of kernel density estimation, we instead allow perplexity to scale slowly with $n$, for example at any rate $n^{\alpha}, \alpha>0$, rather than being held exactly constant.
\iffalse
 \begin{prop}\label{Scaling_prop} Let $\mu$ a probability distribution with bounded support on $\Omega\subset \R^d$. Furthermore, let $\mu$ have be a $C^2$-density function bounded above and below on $\Omega$. Let $X_1,...,X_n\stackrel{iid}{\sim}\mu$ and denote $\sigma_{i,n}:=h_n\sigma_{i}$. Then if $nh_n^d/\log(n)\to\infty$, we have almost surely\begin{equation}\label{perp_1}
\lim_{n\to\infty}\left|\frac{\Perp_n(X_i|h_n\sigma_{i})}{nh_n^d}-(2\pi e)^{d/2}\sigma_i^d\rho(X_i)\right|=0.
\end{equation}
\end{prop}
\fi
\subsection{Main Results}
Motivated by the previous discussion, our first proposition identifies the exact limiting bandwidth which arises as a result of allowing perplexity to grow slowly with the number of samples. This proposition simultaneously illustrates how in this perplexity regime, bandwidths shrink to zero and furthermore elucidates precisely how the bandwidth is chosen adaptively to compensate for the varying density. In what follows, we will simplify the analysis by writing $\sigma_{i,n}$ as $h_n \sigma_i$ where $h_n$ is deterministic and $\sigma_i$ encompasses all of the randomness associated with the draw $X_1,....,X_n$.
\begin{prop}[Local Adaptivity]\label{Scaling_prop}Suppose that $h_n$ is a sequence for which $nh_n^d/\log(n)\to\infty$ and $h_n\to 0$. Suppose further that $\rho(x)$ is a uniformly continuous density that is bounded above and below. Then if $\widehat\sigma_{n}(x)$ is chosen so that $\Perp_n(x|h_n\widehat\sigma_n(x))=\kappa nh_n^d$, then for $\tilde\Omega\subset\subset\Omega$ then $n \to \infty$ \[\lim_{n\to\infty}\left\|\widehat\sigma_n(x)-\frac{1}{\sqrt{2\pi e}}\left(\frac{\kappa}{\rho(x)}\right)^{1/d}\right\|_{L^\infty(\tilde \Omega)}=0,\qquad a.s.\]
\end{prop}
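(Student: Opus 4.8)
The plan is to recast the defining relation $\Perp_n(x\,|\,h_n\widehat\sigma_n(x))=\kappa nh_n^d$ as a root-finding problem, show that the relevant function converges uniformly (almost surely, over $x\in\tilde\Omega$ and over bandwidth scales in a fixed compact interval) to an explicit monotone limit, and then read off the limiting root. \textbf{Step 1 (reformulation).} Starting from the expression for $\Perp_n$ recorded above, substitute $\sigma=h_ns$ and write $K(z)=e^{-|z|^2/2}$. Introducing the generalized kernel density estimators
\[
F_n(x,s):=\tfrac{1}{n(h_ns)^d}\sum_{k=1}^n K\!\Big(\tfrac{x-X_k}{h_ns}\Big),\qquad
G_n(x,s):=\tfrac{1}{n(h_ns)^d}\sum_{k=1}^n \tfrac{|x-X_k|^2}{2h_n^2s^2}\,K\!\Big(\tfrac{x-X_k}{h_ns}\Big),
\]
one obtains the identity
\[
\Psi_n(x,s):=\frac{\Perp_n(x\,|\,h_ns)}{nh_n^d}=s^d\,F_n(x,s)\,\exp\!\Big(\tfrac{G_n(x,s)}{F_n(x,s)}\Big),
\]
so $\widehat\sigma_n(x)$ is precisely a solution of $\Psi_n(x,s)=\kappa$. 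I would also note that $\sigma\mapsto\Perp_n(x\,|\,\sigma)$ is strictly increasing: it equals $e^{H}$ where $H$ is the Shannon entropy of the Gibbs weights $p_{k|i}\propto e^{-|x-X_k|^2/2\sigma^2}$, and with $\beta=1/2\sigma^2$ the standard identity $dH/d\beta=-\beta\,\mathrm{Var}(|x-X|^2)\le 0$ gives monotonicity in $\sigma^2$. Hence $\Psi_n(x,\cdot)$ is strictly increasing and $\widehat\sigma_n(x)$ is its unique root (it exists for $n$ large since the range of $\Perp_n(x\,|\,\cdot)$ is essentially $(1,n-1)$, which contains $\kappa nh_n^d$ because $nh_n^d\to\infty$ and $h_n\to0$).

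\textbf{Step 2 (uniform consistency).} The analytic core is to prove that, almost surely and for every fixed $0<\underline s<\overline s<\infty$,
\[
\sup_{x\in\tilde\Omega,\ s\in[\underline s,\overline s]}\big|F_n(x,s)-(2\pi)^{d/2}\rho(x)\big|\to 0,\qquad
\sup_{x\in\tilde\Omega,\ s\in[\underline s,\overline s]}\big|G_n(x,s)-\tfrac d2(2\pi)^{d/2}\rho(x)\big|\to 0.
\]
Each estimator splits into bias and fluctuation. For the bias, the change of variables $z=(x-X_k)/(h_ns)$ turns the expectation into $\int_{(\Omega-x)/(h_ns)}\phi(z)\,\rho(x+h_nsz)\,dz$ with $\phi\in\{K,\ \tfrac12|z|^2K\}$; since $x\in\tilde\Omega\subset\subset\Omega$ and $h_n\to0$ the domain eventually contains any fixed ball, the tails of the integrable kernels $\phi$ are negligible, and uniform continuity of $\rho$ yields convergence to $\rho(x)\int_{\R^d}\phi$, i.e.\ $(2\pi)^{d/2}\rho(x)$ and $\tfrac d2(2\pi)^{d/2}\rho(x)$ (using $\int\tfrac12|z|^2e^{-|z|^2/2}\,dz=\tfrac d2(2\pi)^{d/2}$), uniformly in $x$ and $s$. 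For the fluctuation I would invoke standard uniform-in-bandwidth almost-sure consistency results for kernel-type estimators: since $s$ is bounded away from $0$, the effective bandwidth $h_ns$ still obeys $n(h_ns)^d/\log n\ge \underline s^{\,d}\,nh_n^d/\log n\to\infty$, which is exactly the rate condition under which the supremum of the centered estimator over $x\in\tilde\Omega,\ s\in[\underline s,\overline s]$ tends to $0$ a.s.\ (the $\log n$ serving as the Borel--Cantelli budget matched against the exponential concentration bound for the associated empirical process). Combining the two pieces, $\Psi_n\to\Psi$ uniformly on $\tilde\Omega\times[\underline s,\overline s]$, where $\Psi(x,s):=(2\pi e)^{d/2}\rho(x)\,s^d$.

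\textbf{Step 3 (extracting the root).} Since $\rho$ is bounded above and below, the unique positive solution of $\Psi(x,s)=\kappa$, namely $s_*(x)=\frac{1}{\sqrt{2\pi e}}(\kappa/\rho(x))^{1/d}$, lies in a fixed compact interval; enlarging $\underline s,\overline s$ so that also $\Psi(x,\underline s)\le\kappa/2$ and $\Psi(x,\overline s)\ge 2\kappa$ uniformly on $\tilde\Omega$, Step 2 forces $\Psi_n(x,\underline s)<\kappa<\Psi_n(x,\overline s)$ for $n$ large, hence by the monotonicity of Step 1 $\widehat\sigma_n(x)\in(\underline s,\overline s)$ for all $x\in\tilde\Omega$. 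On $[\underline s,\overline s]$ the limit $\Psi(x,\cdot)$ is strictly increasing with $\partial_s\Psi$ bounded below uniformly in $x$, so for each $\e>0$ there is $\delta>0$ with $\Psi(x,s_*(x)-\e)\le\kappa-\delta$ and $\Psi(x,s_*(x)+\e)\ge\kappa+\delta$ for every $x$; once $\sup|\Psi_n-\Psi|<\delta$, this localizes every root of $\Psi_n(x,\cdot)=\kappa$ inside $(s_*(x)-\e,s_*(x)+\e)$, giving $\|\widehat\sigma_n-s_*\|_{L^\infty(\tilde\Omega)}\to0$ a.s., which is the claim. (Continuity and uniqueness of the root also show $\widehat\sigma_n$ is a well-defined continuous function of $x$, so the $L^\infty$ norm makes sense.)

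\textbf{Main obstacle.} The only substantive difficulty is Step 2 — the almost-sure \emph{uniform} control, jointly in the location $x$ and the bandwidth scale $s$, of the stochastic fluctuations of $F_n$ and $G_n$ at the rate $nh_n^d/\log n\to\infty$; this requires VC / empirical-process exponential inequalities rather than a crude variance bound, but it is precisely the regime covered by the kernel density estimation literature. Everything else is essentially deterministic bookkeeping on top of that input.
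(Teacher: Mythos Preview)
Your proposal is correct and follows essentially the same route as the paper: both rewrite $\Perp_n(x\,|\,h_ns)/(nh_n^d)$ as a combination of two kernel density estimators (your $F_n,G_n$ correspond to the paper's $K_1,K_2$), invoke a uniform-in-bandwidth almost-sure consistency result (the paper cites Einmahl--Mason explicitly as Lemma~\ref{Einmahl_and_Mason}, which is exactly the black box you appeal to in Step~2), and then localize the root using the monotonicity of $\sigma\mapsto\Perp_n(x\,|\,\sigma)$. Your Step~3 is in fact tidier than the paper's: the paper reaches the same conclusion through a probability-in-$[c_1,c_2]$ argument followed by a somewhat awkward Borel--Cantelli step (invoking independence across $n$ via separate draws), whereas your direct trapping of the root using the uniform closeness of $\Psi_n$ to the strictly monotone limit $\Psi$ is the cleaner way to finish.
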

%With the revealing of the function $\sigma_\kappa$, we proceed in every case to formulate the results with the particular bandwidth $h_n\sigma_\kappa(x)$ at the point $x$ to remove any need for consideration of the perplexity after Proposition \ref{Scaling_prop}. 
In the sequel, we let $\sigma_\kappa(x)$ denote the limiting object above.
The proof of this proposition follows directly from standard results in kernel density estimation. The specific coefficient appearing provides the correct renormalization when using \eqref{Perp1} to estimate the density.% arises from the use of unnormalized kernels (unnormalized by $\sigma_i$ rather than $h_n$) and the convergence of the term inside the exponential in term in \eqref{Perp1}. 

This result does not permit for a constant perplexity, as proposed in the original algorithm. However, it allows for any perplexity which grows faster than $\log(n)$ and slower than $n$. %Admissible rates for the above theorem include $h_n\asymp n^{-1/(d+\xi)}$ for $\xi>1$ in which case perplexity will grow slowly at a rate of $n^{\xi/(d+\xi)}$. Thus, this proposition may be interpreted as identifying the correct asymptotic bandwidth function for which perplexity is nearly constant, that is to say perplexity is $\kappa n^{\xi/(d+\xi)}$. 
Here the coefficient $\kappa$ serves as the parameter representing, after rescaling, the ``effective number of neighbors," as discussed earlier.

\iffalse
\begin{corollary}[Local Adaptivity]\label{specific_bandwidth}Fix $\xi>0$ and let $h_n=n^{-1/(d+\xi)}$ and\[
 \sigma_{i,n}= \frac{h_n}{\sqrt{2\pi e}}\left(\frac{\kappa}{\rho(X_i)}\right)^{1/d}.
\]Then almost surely,\[
\lim_{n\to\infty}\left|\frac{\Perp_n(X_i|\sigma_{i,n})}{n^{\xi/(d+\xi)}}-\kappa\right|= 0.
\]
\end{corollary}
\fi

\iffalse
From now on, we will emphasize the dependence of the shrinking bandwidth in the tSNE energy by rewriting \eqref{discrete_functional_cost} as \begin{equation}\label{KL_n}
\min_{T:\R^d\to\R^m}\mathrm{KL}_n(T|\{\sigma_{i,n}\}_{i=1}^n).
\end{equation}
\fi

Having identified the limit $\sigma_\kappa$, in order to simplify the proofs in the remainder of our results we will be considering the energy having replaced the stochastic, implicitly defined $\sigma_{i,n}$ with the deterministic $h_n \sigma_\kappa(x)$. After making this simplification, for any fixed $T$, we now consider an averaged version of our energies:
\begin{align*}
\tilde {\mathrm{A}}_h[T]&:=\int_\Omega\frac{\int_\Omega\exp\Big(\tfrac{-|x-x'|^2}{2h^2\sigma^2_{\kappa}(x)}\Big)\log(1+|T(x)-T(x')|^2)\rho(x')dx'}{\int_\Omega\exp\Big(\tfrac{-|x-x'|^2}{2h^2\sigma^2_{\kappa}(x)}\Big)\rho(x')dx'}\rho(x)dx\\
\tilde {\mathrm{R}}[T]:&=\log\left(\iint_{\Omega\times \Omega} \frac{1}{1+|T(x)-T(x')|^2}\rho(x)\rho(x')dxdx'\right).
\end{align*}
We briefly remark that while the above quantities are not the expected values of the stochastic counterparts $\mathrm{A}_n[T]$ and $\mathrm{R}_n[T]$, by the law of large numbers and the continuous mapping theorem, for fixed bandwidth, one has $\mathbb{E}[\mathrm{A}_n[T]]\to \tilde {\mathrm{A}}_h[T]$ and $\mathbb{E}[\mathrm{R}_n[T]]\to \tilde {\mathrm{R}}[T]$ as $n\to \infty$. 

For these energies, the fact that the bandwidth goes to zero (i.e. Proposition \ref{Scaling_prop}) gives, in the authors' opinion, a rather surprising result. In particular, we can see that for a wide range of function spaces, this averaged tSNE energy will not have a limiting solution, which comprises our first main result.
\begin{theorem}[Ill-posedness]\label{lack_of_existence}
Let $T_n$ be a sequence of minimizers of the energies\[
  \tilde {\mathrm{A}}_{h_n}[T]+\tilde{\mathrm{R}}[T].
\]Then $T_n$ does not converge pointwise to any $T^*\in L^\infty(\Omega;\R^m)$.
\end{theorem}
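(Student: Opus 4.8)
The plan is to argue by contradiction: I will show that any sequence of minimizers $T_n$ is forced to have $\tilde{\mathrm R}[T_n]\to-\infty$, and that this is incompatible with pointwise convergence to a map in $L^\infty$. The two pillars are (a) an upper bound showing that the infimum of the energy tends to $-\infty$ as the bandwidth vanishes, and (b) lower semicontinuity of the repulsion under pointwise convergence to a bounded limit.

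The first and main step is to prove $\inf_T\bigl(\tilde{\mathrm A}_{h_n}[T]+\tilde{\mathrm R}[T]\bigr)\to-\infty$. I would do this with the explicit competitors $T=\lambda_n\Phi$, where $\Phi(x)=(x_1,\dots,x_m)$ is the $1$-Lipschitz coordinate projection and $\lambda_n\to\infty$ is chosen below. For the repulsion: since $\Phi(x)=\Phi(x')$ only on a Lebesgue-null (hence $\rho\otimes\rho$-null) set, we have $(1+\lambda_n^2|\Phi(x)-\Phi(x')|^2)^{-1}\to 0$ a.e., and dominated convergence gives $\iint_{\Omega\times\Omega}(1+\lambda_n^2|\Phi(x)-\Phi(x')|^2)^{-1}\rho(x)\rho(x')\,dx\,dx'\to 0$, so $\tilde{\mathrm R}[\lambda_n\Phi]\to-\infty$. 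For the attraction I claim $\tilde{\mathrm A}_{h_n}[\lambda_n\Phi]\to 0$. Writing $D_h(x)$ for the denominator in $\tilde{\mathrm A}_h$ and splitting the inner integral over $\{|x-x'|\le Rh\}$ and its complement: on the near region $\log(1+\lambda^2|\Phi(x)-\Phi(x')|^2)\le\lambda^2|x-x'|^2\le\lambda^2R^2h^2$; on the far region the log factor is at most $C(1+\log\lambda)$ (as $\Phi(\Omega)$ is bounded) while the Gaussian mass is at most $Ch^de^{-R^2/C}$, using that $\sigma_\kappa$ is bounded above and below (by \cref{Scaling_prop} and the two-sided bounds on $\rho$); and $D_h(x)\ge ch^d$ uniformly in $x\in\Omega$ for $h$ small, using the measure-density property of the bounded $C^2$ domain $\Omega$ together with $\rho\ge 1/\rho^*$. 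Combining, one gets a bound of the form
\[
\tilde{\mathrm A}_h[\lambda\Phi]\ \le\ C\lambda^2R^2h^2 + C(1+\log\lambda)\,e^{-R^2/C},
\]
and choosing, e.g., $R=R_n=\sqrt{\log(1/h_n)}$ and $\lambda=\lambda_n=h_n^{-1/2}$ drives the right-hand side to $0$ while $\lambda_n\to\infty$. Hence $\tilde{\mathrm A}_{h_n}[\lambda_n\Phi]+\tilde{\mathrm R}[\lambda_n\Phi]\to-\infty$, which proves the claim.

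Given this, the conclusion is short. Since $\log(1+|\cdot|^2)\ge 0$ we have $\tilde{\mathrm A}_h[T]\ge 0$ for every $T$, so minimality of $T_n$ gives $\tilde{\mathrm R}[T_n]\le \tilde{\mathrm A}_{h_n}[T_n]+\tilde{\mathrm R}[T_n]=\inf_T(\cdots)\to-\infty$ (the same reasoning applies to near-minimizers). Now suppose, for contradiction, that $T_n\to T^*$ pointwise a.e. with $T^*\in L^\infty(\Omega;\R^m)$. Then $(1+|T_n(x)-T_n(x')|^2)^{-1}\to(1+|T^*(x)-T^*(x')|^2)^{-1}$ for a.e. $(x,x')\in\Omega\times\Omega$, these integrands are dominated by $1\in L^1(\rho\otimes\rho)$, and the limit is bounded below a.e. by $(1+4\|T^*\|_{L^\infty}^2)^{-1}>0$; dominated convergence then yields $\tilde{\mathrm R}[T_n]\to\tilde{\mathrm R}[T^*]>-\infty$, contradicting the previous line. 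Therefore no such $T^*$ exists.

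The hard part is the quantitative estimate $\tilde{\mathrm A}_{h_n}[\lambda_n\Phi]\to 0$: one must exploit the concentration of the affinity kernel at scale $h_n$, control its exponentially small tails against the only polynomially small normalization $D_{h_n}(x)\sim h_n^d$, and do so uniformly up to $\partial\Omega$, which is precisely where the $C^2$ regularity of the domain and the two-sided bounds on $\rho$ (hence on $\sigma_\kappa$) enter. The remaining pieces — nonnegativity of $\tilde{\mathrm A}_h$, the repulsion estimate for $\lambda_n\Phi$, and the concluding dominated-convergence argument — are routine.
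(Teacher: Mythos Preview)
Your argument is correct and follows a genuinely different route from the paper's. The paper introduces an auxiliary class $F_\omega$ of maps with a prescribed ``averaged modulus of continuity'' (Definition~\ref{omega_class}), assumes a pointwise limit $T^*\in F_\omega$, and builds competitors by rescaling $T^*$ itself, namely $S_n=T^*/(\omega(h_n))^{\alpha/2}$; it then needs a separate lemma (via Lusin's theorem) to show that every $L^\infty$ map lies in some $F_\omega$. Your approach instead uses fixed Lipschitz competitors $\lambda_n\Phi$, independent of any putative limit, and establishes the stronger intermediate statement $\inf_T\bigl(\tilde{\mathrm A}_{h_n}+\tilde{\mathrm R}\bigr)\to-\infty$ directly; the contradiction then comes from $\tilde{\mathrm R}[T_n]\to-\infty$ together with dominated convergence. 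What your route buys is that it avoids the $F_\omega$ machinery and Lusin's theorem entirely, and it yields a statement of independent interest (the infimum of the energy diverges). What the paper's route buys is that, once $T^*\in F_\omega$ is known, the attraction bound $\tilde{\mathrm A}_{h_n}[S_n]\le(\omega(h_n))^{1-\alpha}$ follows in one line from $\log(1+t)\le t$, with no need for your quantitative near/far Gaussian-tail decomposition and the uniform lower bound $D_h(x)\ge ch^d$ up to the boundary. Both arguments ultimately use the same dominated-convergence step $\tilde{\mathrm R}[T_n]\to\tilde{\mathrm R}[T^*]$ to close the contradiction.
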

This behavior can be explained by viewing tSNE from the lens of a force-based method that balances attractive and repulsive forces. Roughly speaking, the idea behind the proof is that as the number of samples grows the bandwidth of the kernel will shrink. The attractive term is dominated by a function of the bandwidth and consequently will shrink to zero. This means that as $n$ grows the repulsive term dominates causing solutions to expand unboundedly. While we have proven this result for the averaged (deterministic) energy, we suspect that the same result would hold for the original stochastic optimization problem.

This insight provides an additional explanation for a well-established engineering technique in tSNE, namely \emph{early exaggeration}. In practice, early exaggeration involves premultiplying the attractive term by a constant greater than one during early stages of gradient descent, which has been empirically shown to yield better results. Our observation clarifies that this practice is not merely heuristic but is, in fact, asymptotically consistent with the underlying behavior of the algorithm. As the attractive force weakens with increasing sample size, early exaggeration could potentially compensate for this effect.

The empirical evidence of the need for early exaggeration to stabilize the algorithm, as well as the asymptotic lack of minimizers, strongly suggests the need to introduce a new model which circumvents these problems yet maintains most of the meaningful structure. To address this, we propose a rescaled version of tSNE, where the entire attractive force is premultiplied by $1/h_n^2$. Incorporating this, we may write the rescaled model for $n$ samples as a minimization of the following energy:\begin{equation}\label{rescaled_discrete}
  \widehat{\mathrm{KL}}_n(T)=\frac{\mathrm{A}_n[T]}{h_n^2}+\mathrm{R}_n[T].
\end{equation}
Notice that we have now transitioned back to the stochastic energy defined in Section \ref{subsection:attraction_repulsion} however here we will be using $\sigma_{i,n}=h_n\sigma_\kappa(X_i)$ as derived in Proposition \ref{Scaling_prop}.
%\rwm{Two questions here: Now that you've transitioned back to the stochastic energy, are you using the original $\sigma_{i,n}$ or the $h_n \sigma_\kappa$? The $A_n$ and $R_n$ were defined using the former. It may be worth pointing our here that we are returning to the original stochastic energy. Second question: can we interpret this rescaled energy in terms of some actual type of KL divergence (by undoing the derivation that we did earlier)? I dont think so because the h_n would have to be with the pij and then it would also hit the repulsion}
Our second main result establishes the convergence of this energy, for fixed $T$ (i.e. pointwise convergence), toward the following limiting energy: %This will lead us to our main results which establishes the pointwise convergence of the energy $\widehat{\mathrm{KL}}_n(T)$ to the Dirichlet regularized repulsion:
\begin{align}\label{continuum_energy} 
&\KL(T):=\nonumber\\
&\frac{\kappa^{2/d}}{2\pi e}\int_\Omega \sum_{i=1}^m |\nabla T_i(x)|^2\rho^{1-2/d}(x)dx+\log\left(\iint_{\Omega\times\Omega} \frac{1}{1+|T(x)-T(x')|^2}\rho(x)\rho(x')dxdx'\right),
\end{align}
%as well as the well-posedness of the respective limiting energy. The latter result is routine in the calculus of variations but primarily serves to contrast Theorem \ref{lack_of_existence}, and suggests that this is the ``right'' continuum model. Stated formally, we present the following two theorems.
\begin{theorem}[Consistency]\label{Main_result}
Let $\mu$ be a distribution on a compact set $\Omega\subset \R^d$ with Lebesgue density bounded above and below. Then for every $T\in C^2(\Omega;\R^m)$ we have\[
\lim_{n\to\infty} \widehat{\mathrm{KL}}_n(T)\to\KL(T).
\]
\end{theorem}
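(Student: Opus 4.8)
The plan is to treat the attractive and repulsive parts of $\widehat{\mathrm{KL}}_n(T)=h_n^{-2}\mathrm{A}_n[T]+\mathrm{R}_n[T]$ separately, under the running bandwidth assumptions $h_n\to0$ and $nh_n^d/\log n\to\infty$. The repulsive term is the soft one: $\frac{1}{n^2}\sum_{i,j}(1+|T(X_i)-T(X_j)|^2)^{-1}$ is a bounded $V$-statistic of the i.i.d.\ sample, so by the strong law of large numbers it tends almost surely to $\iint_{\Omega\times\Omega}(1+|T(x)-T(x')|^2)^{-1}\rho(x)\rho(x')\,dx\,dx'$, a number lying strictly between $0$ and $1$; since $\log$ is continuous there, $\mathrm{R}_n[T]\to\tilde{\mathrm{R}}[T]$, the second term of \eqref{continuum_energy}. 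Everything below concerns the attractive term, where the target is $h_n^{-2}\mathrm{A}_n[T]\to\int_\Omega g(x)\rho(x)\,dx$ almost surely, with $g(x):=\sigma_\kappa^2(x)\sum_{l=1}^m|\nabla T_l(x)|^2$; this is precisely the Dirichlet term of \eqref{continuum_energy} because $\sigma_\kappa^2(x)\rho(x)=\frac{\kappa^{2/d}}{2\pi e}\rho^{1-2/d}(x)$.

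Write $\mathrm{A}_n[T]=\frac{1}{n}\sum_{i=1}^n a_n(X_i)$, with $a_n(X_i)=\big(\sum_j K_i(X_j)\ell_{ij}\big)/\big(\sum_j K_i(X_j)\big)$, $K_i(x)=\exp(-|X_i-x|^2/2h_n^2\sigma_\kappa^2(X_i))$, and $\ell_{ij}=\log(1+|T(X_i)-T(X_j)|^2)$. The strategy is the chain $h_n^{-2}a_n(X_i)\approx h_n^{-2}\bar a_n(X_i)\approx g(X_i)$, where $\bar a_n(x)$ is the deterministic ratio obtained by replacing the empirical measure in $a_n$ with $\mu$; once this holds uniformly enough, $\frac{1}{n}\sum_i g(X_i)\to\int_\Omega g\rho\,dx$ follows from the ordinary SLLN ($g$ being a fixed bounded function on the compact set $\Omega$), while the $\O(nh_n)$ sample points within $\O(h_n)$ of $\partial\Omega$, on which every quantity stays bounded, contribute only $\O(h_n)\to0$ to the average. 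The step $h_n^{-2}\bar a_n(x)\to g(x)$, uniformly on compact interior subsets, is a deterministic Laplace-type expansion: rescaling $x'=x+h_n\sigma_\kappa(x)z$ cancels the Jacobians between numerator and denominator and turns the Gaussian weight into $e^{-|z|^2/2}$; since $T\in C^2$ and $\Omega$ is compact, $\log(1+|T(x)-T(x')|^2)=|DT(x)(x'-x)|^2+\O(|x'-x|^3)$ uniformly, so the numerator equals $h_n^2\sigma_\kappa^2(x)\int e^{-|z|^2/2}|DT(x)z|^2\rho(x+h_n\sigma_\kappa(x)z)\,dz+o(h_n^2)$ while the denominator tends to $(2\pi)^{d/2}\rho(x)$; the Gaussian second-moment identity $\int_{\R^d}e^{-|z|^2/2}|DT(x)z|^2\,dz=(2\pi)^{d/2}\sum_l|\nabla T_l(x)|^2$, together with continuity of $\rho$, then produces the limit $g(x)$. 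The crude bound $\log(1+s)\le s$ and $\sup_{t\ge0}t\,e^{-t/(2h_n^2\sigma_\kappa^2(x))}=\O(h_n^2)$ give $h_n^{-2}\bar a_n(x)\le C\|DT\|_\infty^2$ uniformly on all of $\Omega$, including the boundary layer where the rescaled Gaussian is merely truncated to (roughly) a half-space, so that layer is genuinely negligible.

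The remaining link $h_n^{-2}(a_n(X_i)-\bar a_n(X_i))\to0$, uniformly in $i$, is the technical heart of the theorem and the place where $nh_n^d/\log n\to\infty$ is indispensable. Conditionally on $X_i$, the numerator $S_i^{\mathrm{num}}=\sum_j K_i(X_j)\ell_{ij}$ and denominator $S_i^{\mathrm{den}}=\sum_j K_i(X_j)$ are, up to the harmless $j=i$ self-terms (contributing $0$ and $1$ against a denominator of order $nh_n^d\to\infty$), sums of i.i.d.\ bounded variables whose means are $(n-1)$ times the numerator and denominator of $\bar a_n(X_i)$. Because the Gaussian confines $X_j$ to within $\O(h_n)$ of $X_i$ and $\ell_{ij}=\O(|X_i-X_j|^2)$, one has $|K_i(X_j)\ell_{ij}|\le C h_n^2$ almost surely and $\operatorname{Var}(S_i^{\mathrm{num}})=\O(n h_n^{d+4})$, while $\mathbb{E}[S_i^{\mathrm{num}}]=\O(n h_n^{d+2})$ and $\mathbb{E}[S_i^{\mathrm{den}}]\asymp n h_n^{d}$. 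Bernstein's inequality therefore yields $|S_i^{\mathrm{num}}-\mathbb{E}S_i^{\mathrm{num}}|\le C\sqrt{n h_n^{d+4}\log n}=o(n h_n^{d+2})$, since $(\log n)/(n h_n^d)\to0$, with probability at least $1-n^{-3}$; an analogous coarser bound controls $S_i^{\mathrm{den}}$ while keeping it bounded away from $0$ (even near $\partial\Omega$, where the truncated Gaussian still carries positive mass). Dividing, $a_n(X_i)=\bar a_n(X_i)+o(h_n^2)$ with the same probability; a union bound over $i=1,\dots,n$ and Borel--Cantelli upgrade this to an almost-sure uniform estimate, and hence $h_n^{-2}\mathrm{A}_n[T]\to\int_\Omega g\rho\,dx$. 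The main obstacle is exactly this concentration magnified by $h_n^{-2}$: Hoeffding-type bounds are too weak, so one must exploit the small ($\O(h_n^{d+4})$) variance of the numerator summands through Bernstein's inequality, carry a ratio-of-vanishing-quantities estimate uniformly in $i$, and separately dispose of the $\O(h_n)$ boundary layer where the localizing kernel is truncated.
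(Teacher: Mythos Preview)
Your proposal is correct and follows essentially the same route as the paper: both handle the repulsion by the law of large numbers, and for the attraction both condition on $X_i$, control the numerator and denominator separately via Bernstein's inequality (using the $\O(h_n^{d+4})$ and $\O(h_n^d)$ variance scales), pass through a ratio estimate, take a union bound over $i$, and identify the limiting quotient by a Taylor/Laplace expansion together with a boundary-layer argument. The only cosmetic differences are that the paper normalizes the summands by $h_n^{-(d+2)}$ and $h_n^{-d}$ up front and uses Bernstein (rather than the SLLN) for the final averaging in order to state explicit probabilistic rates, and it swaps the order of the last two steps (average first, then pass to the Dirichlet integrand), but the substance is the same.
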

In fact, we show something slightly stronger than the above statement by deriving exact probabilistic rates for convergence, which can be seen in Proposition \ref{prop:probabilistic_bounds}.

\begin{remark}
We notice that by defining the modified pairwise affinities in the embedded space by \[
\tilde q_{ij}(T)=\frac{(1+|T(X_i)-T(X_j)|^2)^{-h_n^{-2}}}{\sum_{k\ne l}(1+|T(X_k)-T(X_l)|^2)^{-1}}.
\]we can write \[\widehat{\mathrm{KL}}_n(T)=\sum_{ij}p_{ij}\log\frac{p_{ij}}{\tilde q_{ij}(T)}\]which can be viewed as a pseudo Kullback-Leibler divergence as $\tilde q_{ij}(T)$ no longer forms a probability distribution.
\end{remark}

Having identified a candidate for the limiting variational problem, we then demonstrate that this problem has a well-behaved minimizer.% Secondly, we have the well-posedness:  
\begin{theorem}[Well-posedness]
  Let $\mu$ be a distribution supported on a bounded, connected, $C^1$ domain $\Omega\subset \R^d$ with Lebesgue density $\rho(x)$ bounded above and below on $\Omega$. Then there exists $T^*\in H^1(\Omega,\rho)$ for which \[
    \KL(T^*)=\inf_{T:\Omega\to\R^m} \KL(T).
\]
\end{theorem}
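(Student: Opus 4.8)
The plan is to use the direct method of the calculus of variations, exploiting the structure $\KL(T) = \KL_{\mathrm{att}}(T) + \KL_{\mathrm{rep}}(T)$, where the attractive part is the weighted Dirichlet energy $\frac{\kappa^{2/d}}{2\pi e}\int_\Omega |\nabla T|^2 \rho^{1-2/d}\,dx$ and the repulsive part is the bounded term $\log\left(\iint \frac{1}{1+|T(x)-T(x')|^2}\rho(x)\rho(x')\,dx\,dx'\right)$. Since $\rho$ is bounded above and below and $\Omega$ is bounded and connected, the weight $\rho^{1-2/d}$ is comparable to $1$, so the natural energy space $H^1(\Omega,\rho)$ is equivalent to the standard $H^1(\Omega;\R^m)$ with respect to this quadratic form. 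The first step is to observe that $\KL$ is bounded below: the attractive term is nonnegative, and the repulsive term is bounded below since $0 < \frac{1}{1+|T(x)-T(x')|^2} \le 1$ forces the $\log$ to be $\le 0$, but more to the point it is bounded below by $\log$ of something that can in principle go to $0$ — so I should be careful here. The repulsive term is \emph{not} bounded below a priori as $T \to \infty$; it decreases without bound. This is the main subtlety and the key obstacle.

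To handle this, the second step is to quotient out the translation invariance: both terms are invariant under $T \mapsto T + c$ for constant $c \in \R^m$, so without loss of generality we may restrict to maps with $\int_\Omega T\,\rho\,dx = 0$. Then for a minimizing sequence $T_k$, if $\|\nabla T_k\|_{L^2}$ stays bounded, the Poincaré inequality (valid on the bounded connected domain $\Omega$ with the weight $\rho$, using that $\rho$ is bounded above and below) gives $\|T_k\|_{H^1(\Omega,\rho)}$ bounded, hence a weakly convergent subsequence $T_k \rightharpoonup T^*$ in $H^1$ and, by Rellich–Kondrachov, strongly in $L^2(\Omega;\R^m)$ and a.e. along a further subsequence. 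The Dirichlet energy is weakly lower semicontinuous (it is convex and continuous), and the repulsive term is continuous under $L^2$ (indeed a.e.) convergence by dominated convergence, since the integrand is bounded by $1$. So $\KL(T^*) \le \liminf \KL(T_k) = \inf \KL$, and $T^*$ is a minimizer. The remaining step is therefore to rule out the case $\|\nabla T_k\|_{L^2} \to \infty$: I would argue that if the Dirichlet energy blows up, then $\KL(T_k) \to +\infty$ as well, because the repulsive term is bounded below by a fixed constant — and this is where the fix comes in: since $|T(x)-T(x')|$ is controlled and $\rho$ is a probability measure, $\iint \frac{1}{1+|T(x)-T(x')|^2}\rho\rho \ge$ (something), but in fact the repulsive term CAN go to $-\infty$, so I instead need to show it does so more slowly than the attractive term grows. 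Concretely, $|T(x) - T(x')|^2 \le 2\operatorname{diam}(\Omega) \cdot (\text{some Sobolev-type bound})$ — no: I would bound the repulsive term below by $\log\left(\iint (1+|T(x)-T(x')|^2)^{-1}\rho\rho\right) \ge -\log\left(1 + \iint |T(x)-T(x')|^2 \rho\rho\right) \ge -\log(1 + C\|\nabla T\|_{L^2}^2)$ via Poincaré (after centering), using Jensen's inequality on the convex function $-\log$. Since $\log(1 + CS)$ grows like $\log S$ while the attractive term grows like $S$, the sum $\KL(T_k) \ge c S_k - \log(1 + C S_k) \to +\infty$ when $S_k := \|\nabla T_k\|_{L^2}^2 \to \infty$, contradicting that $T_k$ is minimizing (as $\inf \KL < \infty$, witnessed by $T \equiv 0$). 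Hence the minimizing sequence has bounded Dirichlet energy and the argument closes.

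The main obstacle, as indicated, is the coercivity step: the repulsive energy is genuinely unbounded below, so one cannot simply invoke lower bounds, and one must quantitatively compare the logarithmic decay of the repulsion against the linear (in $\|\nabla T\|_{L^2}^2$) growth of the attraction, using the Poincaré inequality and Jensen's inequality to convert the pairwise-distance integral into a Dirichlet-energy bound. Once coercivity modulo translations is established, the rest is the standard direct method, with weak lower semicontinuity of the Dirichlet term and strong continuity of the bounded repulsive term under the compact Sobolev embedding. One should also note $2/d$ could exceed $1$ (when $d = 1$), but the weight $\rho^{1-2/d}$ remains bounded above and below regardless, so no case distinction is needed.
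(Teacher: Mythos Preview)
Your proof is correct and follows essentially the same route as the paper's: center by translation invariance, establish coercivity of $\KL$ on centered $H^1$ maps via Jensen plus the Poincar\'e inequality, then run the direct method using Rellich--Kondrachov compactness, weak lower semicontinuity of the Dirichlet term, and dominated convergence for the repulsive term. The only cosmetic difference is in the coercivity step: the paper linearizes $\log(1+x)$ by a tangent-line bound with a parameter $\beta$ and absorbs the resulting $L^2$ term into the Dirichlet energy, whereas you keep the logarithm and compare the linear growth of $S=\|\nabla T\|_{L^2}^2$ against $\log(1+CS)$ directly---both arguments give the same conclusion.
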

This theorem stands in contrast to the lack of minimizers of the original, unscaled, tSNE energy. While we do not prove it here, the result of Theorem \ref{Main_result} strongly suggests that minimizers of the rescaled discrete energy \eqref{rescaled_discrete} ought to converge to minimizers of the limiting continuum energy \eqref{continuum_energy}. A rigorous treatment of this question, usually described under the topic of $\Gamma$-convergence, is beyond the scope of this paper. We do remark that some of the assumptions of this theorem (regarding non-degenerate densities and connected domain) are necessary, and one can construct counterexamples without them.% yet we leave the issue of gamma convergence, which would rigorously connect the discrete and continuous minimizers, for a future work.

While these results are interesting in their own right, the rescaled model offers more than just a theoretical advancement—it opens up new avenues for understanding the fundamental mechanics of tSNE. By addressing the inherent issues in the traditional model, such as the diminishing attractive force and lack of compactness in minimizers, our new approach provides a more stable and consistent framework. This enhanced understanding not only deepens our knowledge of tSNE’s underlying principles but also paves the way for developing more robust algorithms that avoid the pitfalls of the original model. These innovations could lead to improved performance in practical applications, offering solutions that are both theoretically sound and practically effective.

One immediate consequence of the rescaled model, is that the necessary conditions are given by a system of elliptic PDEs (one for each embedded dimension) with a well-behaved right hand side. Of course, this has implications about the regularity of optimal embeddings $T$, which are easily shown to be smooth. We summarize this in the following theorem.
\begin{theorem}[Necessary Conditions and Regularity]\label{regularity} Any minimizer $T^*$ of $\KL$ will satisfy (in the weak sense) the equation
\begin{equation}\label{necessary_condition}
  \frac{\kappa^{2/d}}{2\pi e}\nabla\cdot \big(\nabla T^*_\ell(x)\rho^{1-2/d}(x)\big)=-4\frac{\int_\Omega \big(T^*_\ell(x)-T^*_\ell(x')\big)\big(1+|T^*(x)-T^*(x')|^2\big)^{-2}\rho(x')dx'}{\int\hspace{-0.2cm}\int_{\Omega\times\Omega} \big(1+|T^*(x)-T^*(x')|^2\big)^{-1}\rho(x)\rho(x')dxdx'},
\end{equation}
with a Neumann condition in each component of $T^*$ (i.e. $\nabla T_\ell^*(x) \cdot \nu = 0$ for each $x \in \partial \Omega$ where $\nu$ is the normal vector to $\partial \Omega$). Furthermore, if $\rho \in C^{k,\alpha}(\Omega)$ for $k \geq 1$ then $T^* \in C^{k+1,\alpha}(\Omega ; \R^m)$.
\end{theorem}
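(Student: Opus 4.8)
The plan is to split the proof into two independent pieces: first, identifying \eqref{necessary_condition} with the Euler--Lagrange equation of $\KL$; second, a regularity bootstrap for the resulting elliptic system. For the first piece, recall that by the preceding well-posedness theorem the infimum of $\KL$ is attained at some $T^*\in H^1(\Omega,\rho)$, and since $\rho$ is bounded above and below this space coincides with $H^1(\Omega;\R^m)$. Fix $\phi\in C^\infty(\bar\Omega;\R^m)$; then $t\mapsto\KL(T^*+t\phi)$ is real-valued and differentiable in a neighborhood of $t=0$, since its Dirichlet part is a quadratic polynomial in $t$ and its repulsive part is $\log F(T^*+t\phi)$ with $F(T):=\iint_{\Omega\times\Omega}(1+|T(x)-T(x')|^2)^{-1}\rho(x)\rho(x')\,dx\,dx'$, which is strictly positive (bound the integrand below by $(1+4M^2)^{-1}$ on $\{|T^*|\le M\}\times\{|T^*|\le M\}$, a set of positive measure for $M$ large) and along which differentiation under the integral sign is justified because the relevant kernels and their derivatives are uniformly bounded. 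Setting the derivative at $0$ equal to zero, integrating the Dirichlet contribution by parts, and exploiting the antisymmetry of $x\mapsto T_\ell^*(x)-T_\ell^*(x')$ under $x\leftrightarrow x'$ to symmetrize the repulsive contribution, one recovers \eqref{necessary_condition} in its weak (distributional) form componentwise in $\ell$, with the boundary term supplying the stated Neumann condition.

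For the regularity piece, write the $\ell$-th equation as $\nabla\cdot\!\big(A(x)\nabla T_\ell^*\big)=g_\ell$ weakly with Neumann data, where $A(x):=\tfrac{\kappa^{2/d}}{2\pi e}\rho^{1-2/d}(x)$ and $g_\ell$ is (a multiple of) the nonlocal right-hand side of \eqref{necessary_condition}. The crucial preliminary observation---the one that lets the bootstrap start, despite our having no a priori pointwise bound on $T^*$---is that $g_\ell\in L^\infty(\Omega)$ unconditionally: $F(T^*)>0$, and bounding $|T_\ell^*(x)-T_\ell^*(x')|$ by $|T^*(x)-T^*(x')|=:s$ gives $|T_\ell^*(x)-T_\ell^*(x')|\big(1+|T^*(x)-T^*(x')|^2\big)^{-2}\le\sup_{s\ge0}s(1+s^2)^{-2}<\infty$, so the integrand defining $g_\ell$ is uniformly bounded. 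Since $\rho\in C^{k,\alpha}$ with $k\ge1$ is bounded away from $0$ and $\infty$, and $t\mapsto t^{1-2/d}$ is smooth on the compact range of $\rho$, the coefficient $A$ is uniformly elliptic and lies in $C^{k,\alpha}$, in particular in $C^{1,\alpha}$. A standard bootstrap---divergence-form $W^{2,2}$ estimates (using that $A$ is Lipschitz), then Calder\'on--Zygmund $L^p$ estimates together with Sobolev embedding to upgrade the integrability of $\nabla T_\ell^*$, iterated---raises $T^*$ from $H^1$ to $W^{2,p}(\Omega)$ for every $p<\infty$, hence to $C^{1,\beta}(\bar\Omega;\R^m)$ for every $\beta\in(0,1)$ (the boundary estimates using the $C^2$ regularity of $\partial\Omega$ assumed in the setup).

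With $T^*$ now continuous, hence bounded on $\bar\Omega$, the regularity of $g_\ell$ improves to track that of $T^*$ and $\rho$. Writing $g_\ell(x)=-\tfrac{4}{F(T^*)}\int_\Omega\Phi_\ell\big(T^*(x),T^*(x')\big)\rho(x')\,dx'$ with $\Phi_\ell(a,b):=(a_\ell-b_\ell)\big(1+|a-b|^2\big)^{-2}\in C^\infty(\R^m\times\R^m)$, and using that the composition of a smooth function with a $C^{j,\alpha}$ map is $C^{j,\alpha}$ (uniformly in the parameter $x'$, whose integration does not affect $x$-regularity), we get $g_\ell\in C^{j,\alpha}$ whenever $T^*\in C^{j,\alpha}$ and $1\le j\le k$. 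Rewriting in nondivergence form, $\Delta T_\ell^*=A^{-1}\big(g_\ell-\nabla A\cdot\nabla T_\ell^*\big)$, the right-hand side then lies in $C^{j-1,\alpha}$ (the summand $\nabla A\cdot\nabla T_\ell^*$ being the less regular one), so interior-plus-boundary Schauder estimates for the Neumann problem give $T^*\in C^{j+1,\alpha}$. Iterating this implication for $j=1,\dots,k$ terminates at $T^*\in C^{k+1,\alpha}(\bar\Omega;\R^m)$. I expect the only genuinely delicate point to be the $L^\infty$ bound on the nonlocal forcing $g_\ell$---once that is secured the rest is a routine elliptic bootstrap---together with the bookkeeping of H\"older exponents and the boundary regularity, which requires $\partial\Omega$ smooth enough for the top-order estimate (for $k\ge2$ one assumes a correspondingly smoother boundary, or states the conclusion as interior regularity).
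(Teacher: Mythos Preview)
Your proposal is correct and follows essentially the same route as the paper: derive the weak Euler--Lagrange equation by first variation, then run an elliptic bootstrap using that the nonlocal right-hand side inherits the regularity of $T^*$. You are in fact more careful than the paper on two points the authors leave implicit---the unconditional $L^\infty$ bound on $g_\ell$ via $\sup_{s\ge0}s(1+s^2)^{-2}<\infty$ that initiates the bootstrap from mere $H^1$ data, and the caveat that global $C^{k+1,\alpha}$ regularity up to the boundary requires $\partial\Omega$ smoother than the standing $C^2$ assumption.
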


\section{Related Works}\label{sec:related_works}
In recent years, tSNE has garnered considerable attention due to its success in visualizing high-dimensional data. Some of the earlier theoretical research on this algorithm was directed to understanding its propensity to separate well-clustered data. For example, \cite{linderman2017clustering} utilized a dynamical systems approach to model the particle system arising from the embedding. They derived a relationship between the early exaggeration parameter and the step size in the gradient descent which caused the embedded clusters to shrink in diameter. Building on this, the authors in \cite{DBLP:journals/corr/abs-1803-01768} gave the first theoretical guarantees on the quality of the visualizations produced by tSNE. In particular, by formalizing the notion of visualizing clustered data, they argue that the results in \cite{linderman2017clustering} could not rule out the possibility that distinct clusters collapsing into each other in the embedding. Of particular technical relevance to our approach is that \cite{DBLP:journals/corr/abs-1803-01768} chooses to replace the perplexity-defined bandwidths with a more explicit rule, namely% namely they directly model the bandwidth at $X_i$ by it's distance to the nearest neighbor, in particular
\[
\sigma_i^2 = \frac{\gamma}{4}\cdot \min_{j\ne i} |X_i-X_j|^2
\]where $\gamma$ is a parameter determined by the shape and separation of the clusters. It is well known that ``closest point'' distances of this form scale on the order of $n^{-1/d}$ which agrees with the scaling observed in our work.

Several other theoretical works have focused on studying tSNE in terms of attraction-repulsion dynamics. The balance between attraction and repulsion forces was first identified in the original tSNE paper \cite{van2008visualizing}, and was further investigated in \cite{Forcefulcolorings,9929350}. For example, \cite{Forcefulcolorings} identifies the magnitude of the forces acting on a particular particle as a feature encoding additional useful information. We build upon this work by examining the scaling relationship between the attraction and the repulsion as the number of data points increases.

%A viewpoint which is particularly relevant to this work is the perspective of tSNE as an attraction-repulsion spectrum. This was first informally pointed out from its inception in \cite{van2008visualizing} and further investigated in \cite{Forcefulcolorings,9929350} where tSNE was proposed to be understood as a force-based method which balanced attractive and repulsive forces. We note that \cite{Forcefulcolorings} also proposes utilizing the total force acting upon a particle as an additional feature in the embedding.

When viewed in terms of attraction and repulsion, early exaggeration can be understood simply as an increased affect of the attractive term by multiplying by a scalar $\alpha>1$. Continuing in this perspective, \cite{JMLR:v23:21-0055} perform an empirical analysis of effect of varying $\alpha$. They demonstrate that for stronger attractive forces embeddings can better represent continuous manifold structures, while for stronger repulsive forces embeddings can better recover discrete cluster structure. Notably, they provide additional empirical evidence to a question initially posed by  \cite{linderman2017clustering}: \textit{is the early exaggeration stage of tSNE equivalent to power iterations in spectral embeddings?} This question remained unanswered until the recent work of  \cite{cai2022theoretical} which established the asymptotic equivalence of the early exaggeration stage with power iterations from Laplacian eigenmaps in a rigorous way amongst other significant contributions. In particular, they argue that for strongly clustered data one can simply replace the early exaggeration stage by a spectral embedding using the eigenvectors corresponding to the $m$ smallest eigenvectors of the Laplacian matrix associated to the matrix $P_n$, and then to continue afterwards using gradient descent on the original tSNE energy.

In all of these works, the number of particles was taken to be finite, and hence the limiting behavior of the model remained unexplained. %The closest a work was to understanding the limiting model was in \cite{Forcefulcolorings} where they analyzed mean field behavior in the context of a single homogeneous cluster in the high dimensional space. In particular, they held the number of particles fixed but took an expectation over the coalitions of $n$ particles. Contrarily, our observations hold in the context of heterogeneous data (assuming the densities can be bounded below) and consider the stochastic effects of taking the number of samples to infinity. 
The only work that we are aware of in this direction is the recent work \cite{auffinger2023equilibriumdistributionstdistributedstochastic}. They establish a large data limit for a modified version of tSNE, and demonstrate convergence towards a non-local, kernel-based energy similar to what we call $\tilde A_h$ and $\tilde R$. The large data result in \cite{auffinger2023equilibriumdistributionstdistributedstochastic} is stated in terms of limits of minimizers, which is notably stronger than our result. However, they require the perplexity to grow at a rate proportional to $n$. Consequently their limiting model does not localize to a Dirichlet energy and stays completely nonlocal. Our work complements theirs by allowing the bandwidths to decrease to zero and requiring only very mild growth of the perplexity (e.g. like $\log(n)$), and along the way we identify a loss of compactness of minimizers which does not occur in the case where perplexity scales like $n$. %To our knowledge, this is the first attempt to rigorously understand the implicitly determined bandwidths from the perplexity of $p_{j|i}$.

%We point out that in most of the theoretical work on tSNE, a great deal of effort was dedicated to incorporating the effects of early exaggeration in the analysis. The necessitation of early exaggeration in the optimization routine is arguably one of the most perplexing and relevant theoretical questions which can be asked in the context of tSNE; \textit{why does the gradient of a modified energy so efficiently amortize the original tSNE energy?} As we have already alluded to, we credit this phenomenon directly to the ill-posedness of the limiting model. Our work addresses these gaps by offering a new theoretical framework that explains the behavior of tSNE revealing previously unreported phenomena and practical insights while supporting prior observations. \rwm{I feel like this last paragraph may be somewhat unnecessary: we've said it in other places in the paper.}

\section{Proof of Main Results}
\subsection{Stochastic Convergence of the Bandwidths}
The starting point for our proofs is to first rigorously establish that the bandwidth goes to zero, and to precisely quantify the local adaptivity. To begin, we will present a simplified version of Corollary 1 from \cite{Einmahl_and_Mason} which is a kernel density estimation consistency result holding uniformly across bandwidths. 
\begin{lemma}\label{Einmahl_and_Mason}Assume $K$ is (i) bounded, (ii) $K(s)=\phi(p(s))$ for a polynomial $p:\R^d\to \R$ and $\phi$ right continuous, (iii) \[
\int_{\R^d} \sup_{|y|\geq |x|} |K(y)|dx<\infty,
\]and (iv) integrates to 1. Then for any sequences $0 < a_n < b_n < 1$, satisfying $b_n \to 0$ and $n a_n/ \log n \to\infty$, and any uniformly continuous density $\rho$, we have\[
\lim_{n\to\infty}\sup_{a_n\leq h\leq b_n}\left\|\frac{1}{nh^d}\sum_{i=1}^n K\left(\frac{x-X_i}{h}\right)-\rho(x)\right\|_\infty=0\qquad a.s.
\]
\end{lemma}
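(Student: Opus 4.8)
The plan is to obtain this as a specialization of Corollary~1 of \cite{Einmahl_and_Mason}: I would check that hypotheses (i)--(iv) place us in their framework and that their conclusion restricts to ours, and I would also lay out the underlying bias--variance mechanism so that the role of each hypothesis is transparent. Write $\hat\rho_h(x) := \frac{1}{nh^d}\sum_{i=1}^n K\big(\frac{x-X_i}{h}\big)$ and $\bar\rho_h(x):=\mathbb{E}[\hat\rho_h(x)] = \int_{\R^d} K(u)\,\rho(x-hu)\,du$. By the triangle inequality,
\[
\sup_{a_n \le h \le b_n}\big\|\hat\rho_h-\rho\big\|_\infty \;\le\; \sup_{a_n \le h \le b_n}\big\|\hat\rho_h-\bar\rho_h\big\|_\infty \;+\; \sup_{0 < h \le b_n}\big\|\bar\rho_h-\rho\big\|_\infty,
\]
so it is enough to send each term to $0$, the first almost surely.

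The second (bias) term is elementary. Using $\int K = 1$ from (iv), $\bar\rho_h(x)-\rho(x)=\int K(u)\big(\rho(x-hu)-\rho(x)\big)\,du$. A uniformly continuous density on $\R^d$ is automatically bounded, say by $\rho^*$ (it must even vanish at infinity), so splitting the integral at $|u|=M$ and denoting by $\omega_\rho$ the modulus of continuity of $\rho$ gives
\[
\big\|\bar\rho_h-\rho\big\|_\infty \;\le\; \omega_\rho(Mh)\!\int_{|u|\le M}\!|K(u)|\,du \;+\; 2\rho^*\!\int_{|u|>M}\!|K(u)|\,du,
\]
where $\int_{\R^d}|K|<\infty$ because $|K(u)|\le \sup_{|y|\ge|u|}|K(y)|$ together with (iii). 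Taking $M$ large and then $h\le b_n$ small (recall $b_n\to 0$) drives this to $0$.

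The first (fluctuation) term is the heart of the matter, and it is exactly what the empirical-process machinery behind \cite{Einmahl_and_Mason} delivers. The key structural fact is that hypothesis (ii), $K=\phi\circ p$ with $p$ a polynomial and $\phi$ right-continuous, forces the function class $\mathcal{K}:=\{\,y\mapsto K((x-y)/h):x\in\R^d,\ h>0\,\}$ to be a pointwise-measurable VC-type class; hypothesis (i) makes it uniformly bounded and hypothesis (iii) provides an integrable envelope. I would then, for each fixed $h$, apply a Talagrand-type concentration inequality to the supremum over $x$ of the centered empirical process indexed by $\mathcal{K}$, bound its expectation by a uniform-entropy chaining estimate, peel the range $[a_n,b_n]$ into geometrically spaced blocks of bandwidths, and close with Borel--Cantelli; the net effect is that, almost surely, $\sqrt{nh^d}\,\|\hat\rho_h-\bar\rho_h\|_\infty = O\big(\sqrt{|\log h|\vee\log\log n}\big)$ uniformly in $a_n\le h\le b_n$. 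The hypotheses $b_n\to 0$ and $na_n/\log n\to\infty$ are precisely what make the accompanying normalizing factor $\sqrt{(|\log h|\vee\log\log n)/(nh^d)}$ tend to $0$ uniformly over that range, so this term vanishes a.s.\ as well. I expect this fluctuation estimate --- concretely, verifying the VC-type entropy bound for $\mathcal{K}$ and running the peeling/Borel--Cantelli argument --- to be the only genuinely substantive step; everything else is the elementary bias computation above, and since the assertion is quoted essentially verbatim from \cite{Einmahl_and_Mason}, I anticipate the paper simply defers to that corollary.
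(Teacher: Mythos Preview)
Your proposal is correct and matches the paper's approach exactly: the lemma is stated as a simplified version of Corollary~1 from \cite{Einmahl_and_Mason} and is not re-proved in the paper. Your anticipation that the paper simply defers to that corollary is right, and the bias--variance sketch you give is a sound outline of what underlies that result.
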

%\rwm{In this statement, I have a few questions: what norm are we using in measuring the limit? $\infty$ correct? I think there will be an issue with that at points on $\partial \Omega$: this is because their result is stated for a uniformly continuous density $\rho$, but our density is only continuous away from $\partial \Omega$. It should be true though in any region that removes a tube around the boundary.}

The authors in \cite{Einmahl_and_Mason} note that this result is ``immediately applicable to proving uniform consistency of kernel-type estimators when the bandwidth $h$ is a function of the location $x$...", a fact which is immediately pertinent in our setting. The main idea of the proof of Proposition \ref{Scaling_prop} will be to apply this result with specific choices of $a_n,b_n$. We do remark here that we this result will not hold on the entirety of $\Omega$ due to the presence of the boundary. However, for any compactly embedded subset $\tilde\Omega\subset\subset \Omega$ the above convergence result will hold.% As we see in the proof, this can be done by setting $h=h_n \sigma(x)$ for some bounded function $c_1\leq\sigma(x)\leq c_2$ and the upper and lower bounds $a_n=c_1 h_n$ and $b_n = c_2 h_n$ for some $h_n\to 0$ for which $n h_n/ \log n \to\infty$.\iffalse This can be done by reframing the result above to bounded functions $c_1\leq\sigma(x)\leq c_2$ and setting $h=h_n \sigma(x)$, $a_n=h_n c_1$ and $b_n= h_nc_2$: \[
%\lim_{n\to\infty}\sup_{c_1\leq \sigma(x)\leq c_2}\left\|\frac{1}{nh_n^d}\sum_{i=1}^n K\left(\frac{x-X_i}{h}\right)-\rho(x)\right\|=0\qquad a.s.
%\]for some $h_n\to 0$ such that $n h_n/ \log n \to\infty$.\fi
\begin{proof}[Proof of Proposition \ref{Scaling_prop}]
First note that by writing \[
K_1(s):=\frac{1}{(2\pi)^{d/2}}\exp\left(\frac{-|s|^2}{2}\right),\qquad K_2(s):=\frac{1}{d(2\pi)^{d/2}}|s|^2\exp\left(\frac{-|s|^2}{2}\right),
\]we have \[
\frac{\Perp_n(x|\sigma)}{n\sigma^d}=(2\pi)^{d/2}\frac{1}{n\sigma^d}\sum_{i=1}^nK_1\left(\frac{x-X_i}{\sigma}\right)\exp\left(\frac{d}{2}\cdot\frac{\tfrac{1}{n\sigma^d}\sum_{i=1}^nK_2\left(\frac{x-X_i}{\sigma}\right)}{\tfrac{1}{n\sigma^d}\sum_{i=1}^nK_1\left(\frac{x-X_i}{\sigma}\right)}\right)
\]
By Lemma \ref{Einmahl_and_Mason} along with the probabilistic continuous mapping theorem (along with the strict positivity of the density) we then have that for sequences $0<a_n<b_n<1$ for which $b_n\to 0$ and $n a_n^d/\log(n)\to\infty$
\[\lim_{n\to\infty}\sup_{a_n\leq\sigma\leq b_n}\left\|\frac{\Perp_n(x|\sigma)}{n\sigma^d}-(2\pi e)^{d/2}\rho(x)\right\|_{L^\infty(\tilde \Omega)}=0.\]In particular, for fixed $\delta>0$, if $h_n$ is as in the statement, by writing $a_n=c_1 h_n$ and $b_n=c_2 h_n$ with\[
c_1=\frac{1}{\sqrt{2\pi e}}\left(\frac{\kappa-\delta}{ \rho^*}\right)^{1/d},\qquad c_2=\frac{1}{\sqrt{2\pi e}}\left(\frac{\kappa+\delta}{ {\rho^*}^{-1}}\right)^{1/d}
\] and $\sigma=h_n \sigma(x)$, we can rewrite the above result as \begin{equation}\label{convergence_of_perplexity}
\lim_{n\to\infty}\sup_{c_1\leq \sigma(x)\leq c_2}\left\|\frac{\Perp_n(x|h_n\sigma(x))}{nh_n^d}-(2\pi e)^{d/2}\sigma^d(x)\rho(x)\right\|_{L^\infty(\tilde \Omega)}=0,
\end{equation}
where we are free to multiply the terms by $\sigma(x)$ since it is bounded. \iffalse The above expression makes clear that\[
\sigma^*(x):=\frac{1}{\sqrt{2\pi e}}\left(\frac{\kappa}{\rho(x)}\right)^{1/d}
\]is the unique bandwidth function for which
\[ \lim_{n\to\infty}\left\|\frac{\Perp_n(x|h_n\sigma^*(x))}{nh_n^d}-\kappa\right\|_\infty=0 .
\]Clearly $c_1\leq \sigma^*(x)\leq c_2$. \fi Furthermore for fixed $x\in\R^d$, given $\eta>0$ for $n$ sufficiently large,\begin{align*}
\mathbb{P}\left(\left|\frac{\Perp_n(x|h_nc_2)}{nh_n^d}-\frac{(\kappa+\delta)}{\ubar \rho}\cdot \rho(x)\right|<\frac{\delta}{2}\right)=1-\eta
\end{align*}
In particular with probability at least $1-\eta$,\[
\kappa+\delta/2<\frac{\Perp_n(x|h_nc_2)}{nh_n^d}.
\]
The monotonicity of $\sigma\mapsto\Perp(x|\sigma)$ thus implies $\widehat \sigma_n(x)\leq c_2$. Doing the same for the lower bound, we can conclude $n$ sufficiently large \iffalse by inclusion exclusion, when $n>\max\{N_1,N_2\}$\fi$
\mathbb{P}(\widehat\sigma_n(x)\in[c_1,c_2])>1-2\eta,
$ in particular,\[
\lim_{n\to\infty}\mathbb{P}(\widehat\sigma_n(x)\in[c_1,c_2])=1.
\]Furthermore, we can assume without loss of generality that the events \[
\{\widehat\sigma_n(x)\in[c_1,c_2]\},\qquad\{\widehat\sigma_{n+1}(x)\in[c_1,c_2]\}
\]are independent, by considering independent draws of the $X_i$ for each $n$.%by drawing an entirely new collection of $X_1,...,X_n$ for each $n$. 
Thus by the Borel-Cantelli lemma, we can pass the limit inside the probability operator to conclude\[
\mathbb{P}(\limsup_{n\to\infty}\{\widehat\sigma_n(x)\in[c_1,c_2]\})=1.
\]In particular, this shows that the tail of the sequence $\widehat\sigma_n(x)$ is uniformly bounded almost surely in $n$. Therefore, we may apply Equation \eqref{convergence_of_perplexity} to the tail of the sequence $\widehat \sigma_n(x)$ to show
 \[
\lim_{n\to\infty}\left\|\kappa-(2\pi e)^{d/2}\widehat\sigma_n^d(x)\rho(x)\right\|_{L^\infty(\tilde \Omega)}=0,
\]
which implies the result.
\end{proof}

\subsection{Non-existence of Limiting Minimizers}
We now turn our attention to the ``ill-posedness'' of the tSNE energy, in the sense that minimizers of the averaged energy cannot converge as $n \to \infty$. To begin, we present the following function class which will be useful in facilitating our proof.
\begin{defn}\label{omega_class}Let $\omega:\R^+\to\R^+$ be a continuous, increasing function with $\omega(0)=0$. We say that $T\in F_\omega(\Omega;\R^m)$ if \[
    \int_\Omega \frac{\int_\Omega \exp\Big(\tfrac{-|x-x'|^2}{2h^2\sigma^2_{\kappa}(x)}\Big)|T(x)-T(x')|^2\rho(x)\rho(x')dx'}{\int_\Omega \exp\Big(\tfrac{-|x-x'|^2}{2h^2\sigma^2_{\kappa}(x)}\Big)\rho(x')dx'} \,dx \leq \omega(h).
\]
\end{defn}
In Lemma \ref{lem:F_omega_big}, we will show that actually every function in $L^\infty(\Omega;\R^m)$ is an element of $F_\omega(\Omega;\R^m)$ for some $\omega$. However for some function classes one can directly construct the function $\omega$: for example for the H\"older space $C^\alpha(\Omega ; \R^m)$ it suffices to consider $\omega(h) = C h^{2\alpha}$. This definition mainly serves to provide a convenient framework for us to prove the following lemma.
\begin{lemma}
Let $T_n$ be a sequence of minimizers of the energies\[
  \tilde {\mathrm{A}}_{h_n}[T]+\tilde{\mathrm{R}}[T],
\] with $h_n \to 0$. Then $T_n$ does not pointwise converge to any $T^*\in F_\omega(\Omega;\R^m)$.
\end{lemma}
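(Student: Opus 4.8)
The plan is to argue by contradiction, squeezing the minimal energy between a lower bound coming from $T^*$ and an upper bound coming from a cleverly rescaled competitor, and showing these are incompatible. So suppose $T_n$ converges pointwise (hence $(\rho\otimes\rho)$-a.e.\ on $\Omega\times\Omega$) to some $T^*\in F_\omega(\Omega;\R^m)$; note that membership in $F_\omega$ forces $T^*$ to be finite-valued a.e., since the defining integral is finite and the kernel weights are strictly positive.

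\emph{Lower bound.} Since $\log(1+|\cdot|^2)\ge 0$, the attractive term is nonnegative, so $\tilde{\mathrm A}_{h_n}[T_n]+\tilde{\mathrm R}[T_n]\ge \tilde{\mathrm R}[T_n]$. Because $T_n(x)-T_n(x')\to T^*(x)-T^*(x')$ for a.e.\ $(x,x')$ and $0\le(1+|T_n(x)-T_n(x')|^2)^{-1}\le 1$, Fatou's lemma gives $\liminf_n \iint_{\Omega\times\Omega}(1+|T_n(x)-T_n(x')|^2)^{-1}\rho(x)\rho(x')\,dx\,dx'\ge \iint_{\Omega\times\Omega}(1+|T^*(x)-T^*(x')|^2)^{-1}\rho(x)\rho(x')\,dx\,dx'$. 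The right-hand integrand is strictly positive a.e.\ (as $T^*$ is finite a.e.), so that integral is strictly positive and hence $\liminf_n\tilde{\mathrm R}[T_n]\ge \tilde{\mathrm R}[T^*]>-\infty$. Consequently $\liminf_n\big(\tilde{\mathrm A}_{h_n}[T_n]+\tilde{\mathrm R}[T_n]\big)>-\infty$.

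\emph{Upper bound via a rescaled competitor.} Fix the smooth, globally Lipschitz map $S_0(x):=x_1 e_1$; by the H\"older remark following Definition~\ref{omega_class} it lies in $F_{\omega_0}$ with $\omega_0(h)=Ch^2$. Using $\log(1+t)\le t$ one gets, for $\lambda\ge 1$, the bound $\tilde{\mathrm A}_h[\lambda S_0]\le \lambda^2\omega_0(h)=C\lambda^2h^2$. Taking $\lambda_n:=h_n^{-1/2}$ (legitimate since $h_n\to 0$), we have $\lambda_n\to\infty$ while $\tilde{\mathrm A}_{h_n}[\lambda_n S_0]\le Ch_n\to 0$. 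On the other hand, the set $\{(x,x')\in\Omega\times\Omega:\ x_1=x_1'\}$ is a hyperplane slice, hence $(\rho\otimes\rho)$-null, so $(1+\lambda_n^2|S_0(x)-S_0(x')|^2)^{-1}\to 0$ for a.e.\ $(x,x')$; dominated convergence then yields $\iint_{\Omega\times\Omega}(1+\lambda_n^2|S_0(x)-S_0(x')|^2)^{-1}\rho(x)\rho(x')\,dx\,dx'\to 0$, i.e.\ $\tilde{\mathrm R}[\lambda_n S_0]\to-\infty$. Hence $\tilde{\mathrm A}_{h_n}[\lambda_n S_0]+\tilde{\mathrm R}[\lambda_n S_0]\to-\infty$, and by minimality of $T_n$ we conclude $\tilde{\mathrm A}_{h_n}[T_n]+\tilde{\mathrm R}[T_n]\to-\infty$, contradicting the lower bound.

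The main obstacle is the design of the competitor: it must simultaneously (i) separate $(\rho\otimes\rho)$-almost every pair of points so that rescaling by $\lambda_n\to\infty$ drives the repulsion to $-\infty$, and (ii) be regular enough that the attraction $\tilde{\mathrm A}_{h_n}[\lambda_n S_0]$ remains controlled under that same rescaling — which is precisely where the quadratic bandwidth rate $\omega_0(h)=Ch^2$ (equivalently, the vanishing of the attractive kernel's second moment as $h\to 0$) enters, and why the rates must be balanced so that $\lambda_n h_n\to 0$. The nonnegativity of $\tilde{\mathrm A}$ and the Fatou lower bound are routine.
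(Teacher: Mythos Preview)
Your argument is correct, and it takes a genuinely different route from the paper's. The paper uses the assumed limit $T^*$ itself as the competitor, rescaled by $(\omega(h_n))^{-\alpha/2}$ so that the $F_\omega$-modulus of $T^*$ controls the attractive term while the rescaling drives the repulsion to $-\infty$. You instead fix an explicit Lipschitz map $S_0(x)=x_1 e_1$ (whose $F_{\omega_0}$-modulus is the universal $Ch^2$) and rescale it by $h_n^{-1/2}$; this produces, independently of any assumed limit, a sequence of competitors showing that the infimum of $\tilde{\mathrm A}_{h_n}+\tilde{\mathrm R}$ tends to $-\infty$. The hypothesis $T^*\in F_\omega$ then enters your argument only through the trivial consequence that $\tilde{\mathrm R}[T^*]>-\infty$, which combined with Fatou yields the contradicting lower bound.

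What this buys you: your upper bound is decoupled from $T^*$ and in fact establishes the stronger statement that the minimal energies diverge to $-\infty$, so minimizers cannot converge pointwise to \emph{any} map $T^*:\Omega\to\R^m$ (not merely those in some $F_\omega$). It is also more robust in edge cases --- for instance, if $T^*$ happened to be constant, rescaling $T^*$ itself as the paper does would not move the repulsion at all, whereas your fixed competitor $S_0$ is unaffected. The paper's approach, on the other hand, stays closer to the conceptual point that the attractive energy of the putative limit itself vanishes at rate $\omega(h)$, and the modulus $\omega$ of $T^*$ explicitly dictates the rescaling.
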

\begin{proof}Suppose for the sake of contradiction that there exists a limiting map $T^*\in F_\omega(\Omega;\R^m)$, and consider the sequence of maps $\{T^*/\big(\omega(h_n)\big)^{\alpha/2}\}_{n=1}^\infty$ for any $\alpha\in (0,1)$. Notice that \begin{align*}
    \tilde {\mathrm{A}}_{h_n}\left[\frac{T^*}{\big(\omega(h_n)\big)^{\alpha/2}}\right]&\leq \frac{1}{\big(\omega(h_n)\big)^{\alpha}}\int_\Omega \frac{\int_\Omega \exp\Big(\tfrac{-|x-x'|^2}{2h^2\sigma^2_{\kappa}(x)}\Big)|T^*(x)-T^*(x')|^2\rho(x)\rho(x')dx'}{\int\exp\Big(\tfrac{-|x-x'|^2}{2h^2\sigma^2_{\kappa}(x)}\Big)\rho(x')dx'} \,dx\\
&\leq \big(\omega(h_n)\big)^{1-\alpha}
\end{align*}
since $T^*\in F_\omega(\Omega;\R^m)$ by assumption. Furthermore, since $\tilde {\mathrm{A}}_{h_n}[T]\geq 0$ for every $T$, we have\[
\tilde {\mathrm{A}}_{h_n}\left[\frac{T^*}{\big(\omega(h_n)\big)^{\alpha/2}}\right]\leq \tilde {\mathrm{A}}_{h_n}[T_n]+ \big(\omega(h_n)\big)^{1-\alpha}.
\]Next, we estimate the contribution due to the repulsion when $\omega(h_n)<1:$\begin{align*}
\tilde{\mathrm{R}}\left[\frac{T^*}{\big(\omega(h_n)\big)^{\alpha/2}}\right]&=\log\left(\iint \frac{\big(\omega(h_n)\big)^{\alpha}}{\big(\omega(h_n)\big)^{\alpha}+|T^*(x)-T^*(x')|^2}\rho(x)\rho(x')dxdx'\right)\\
&\leq \alpha\log\big(\omega(h_n)\big)+\tilde{\mathrm{R}}[T^*].
\end{align*}The pointwise convergence of $T_n\to T^*$ as well as the fact that the integrand of $\tilde{\mathrm{R}}[T]$ is bounded by 1 implies, by the the dominated convergence theorem, that for every $\varepsilon>0$ there is $n$ sufficiently large for which $\tilde{\mathrm{R}}[T^*]<\tilde{\mathrm{R}}[T_n]+\varepsilon$. Additionally, since $\big(\omega(h_n)\big)^{1-\alpha}+\alpha\log\big(\omega(h_n)\big)\to-\infty$ as $h_n\to 0$, we may bound this quantity above by $-2\varepsilon$ by choosing $n$ large enough. Together, these considerations show that \[
\tilde {\mathrm{A}}_{h_n}\left[\frac{T^*}{\big(\omega(h_n)\big)^{\alpha/2}}\right]+\tilde{\mathrm{R}}\left[\frac{T^*}{\big(\omega(h_n)\big)^{\alpha/2}}\right]<\tilde{ \mathrm{A}}_{h_n}[T_n]+\tilde{\mathrm{R}}[T_n]-\varepsilon
\]which contradicts the minimality of $T_n$.
\end{proof} We now justify our earlier claim that the function space $F_\omega(\Omega;\R^m)$ is quite broad. 
\begin{lemma}\label{lem:F_omega_big}
  If $T \in L^\infty(\Omega;\R^m)$ then $T \in F_\omega(\Omega;\R^m)$ for some continuous, increasing $\omega:\R^+ \to \R^+$ with $\omega(0) = 0$.
  \label{lem:T_in_F_omega}
\end{lemma}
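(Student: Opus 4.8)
The membership $T\in F_\omega(\Omega;\R^m)$ means, by Definition \ref{omega_class}, precisely that
\[
\Phi(h):=\int_\Omega \frac{\int_\Omega \exp\big(\tfrac{-|x-x'|^2}{2h^2\sigma^2_{\kappa}(x)}\big)\,|T(x)-T(x')|^2\,\rho(x)\rho(x')\,dx'}{\int_\Omega \exp\big(\tfrac{-|x-x'|^2}{2h^2\sigma^2_{\kappa}(x)}\big)\,\rho(x')\,dx'}\,dx \le \omega(h)\quad\text{for all }h.
\]
Since the inner fraction is a weighted average of $|T(x)-T(x')|^2\le 4\|T\|_{L^\infty}^2$, we have $0\le\Phi(h)\le 4\|T\|_{L^\infty}^2$ for every $h$. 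The plan is therefore to prove that $\Phi(h)\to 0$ as $h\to 0^+$, and then to manufacture from $\Phi$ a continuous, increasing majorant $\omega$ with $\omega(0)=0$; the conclusion is then automatic.

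The engine behind the decay is a pair of uniform estimates on the normalization $Z_h(x):=\int_\Omega \exp\big(-|x-x'|^2/2h^2\sigma_\kappa^2(x)\big)\rho(x')\,dx'$. First, because $\rho$ is bounded above and below we have $\sigma_-\le\sigma_\kappa\le\sigma_+$ on $\Omega$ for some $0<\sigma_-\le\sigma_+$ (from $\sigma_\kappa(x)=(2\pi e)^{-1/2}(\kappa/\rho(x))^{1/d}$), and because $\Omega$ is a $C^2$ domain it satisfies the measure-density property $|\Omega\cap B(x,r)|\ge\theta_0 r^d$ for $x\in\Omega$ and $r$ small; combining these yields $Z_h(x)\ge c_0 h^d$ for all $x\in\Omega$ and all small $h$, with $c_0>0$ uniform. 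Second, bounding the kernel by $\exp(-|x-x'|^2/2h^2\sigma_+^2)$ and using the lower bound on $Z_h$, a Gaussian integration gives the complementary bound $\int_\Omega \exp\big(-|x-x'|^2/2h^2\sigma_\kappa^2(x)\big)Z_h(x)^{-1}\rho(x)\,dx\le C$, with $C$ independent of $x'$ and of (small) $h$.

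For the decay itself I would argue by density. Since $T\in L^\infty(\Omega;\R^m)\subset L^2(\Omega,\rho)$ and $C(\bar\Omega;\R^m)$ is dense in $L^2(\Omega,\rho)$, write $T=g+r$ with $g\in C(\bar\Omega;\R^m)$ (hence uniformly continuous) and $\|r\|_{L^2(\rho)}$ arbitrarily small, and use $|T(x)-T(x')|^2\le 2|g(x)-g(x')|^2+2|r(x)-r(x')|^2$ to split $\Phi$ into a ``$g$-part'' and an ``$r$-part''. In the $g$-part, split the inner integral at $|x-x'|\le\delta$ versus $>\delta$: on the near set $|g(x)-g(x')|^2\le\eta_g(\delta)^2$ (the modulus of continuity of $g$) while the inner kernel integral is $\le Z_h(x)$, contributing $\le\eta_g(\delta)^2$; on the far set the kernel is $\le e^{-\delta^2/2h^2\sigma_+^2}$, and after dividing by $Z_h\ge c_0 h^d$ this contributes a quantity that vanishes as $h\to0$ for each fixed $\delta$. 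In the $r$-part, use $|r(x)-r(x')|^2\le 2|r(x)|^2+2|r(x')|^2$: the $|r(x)|^2$ term pairs with a weight of total mass $1$ and contributes a constant multiple of $\|r\|_{L^2(\rho)}^2$, while the $|r(x')|^2$ term, after Fubini and the complementary bound, likewise contributes a constant multiple of $\|r\|_{L^2(\rho)}^2$. Altogether $\limsup_{h\to0}\Phi(h)\le 2\eta_g(\delta)^2+C'\|r\|_{L^2(\rho)}^2$; sending $\delta\to0$ and then shrinking $\|r\|_{L^2(\rho)}$ gives $\Phi(h)\to0$.

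Finally, to pass from $\Phi$ to $\omega$, set $M(h):=\sup_{0<s\le h}\Phi(s)$, which is non-decreasing, bounded by $4\|T\|_{L^\infty}^2$, and satisfies $M(0^+)=0$ by the decay just proved, and define $\omega(h):=h+\int_1^2 M(hu)\,du$ for $h>0$ and $\omega(0):=0$ (equivalently $\omega(h)=h+\frac1h\int_h^{2h}M(t)\,dt$). Then $\omega$ is continuous (a running average of a bounded monotone function is continuous, and the extra $+h$ gives continuity and strict increase at $0$), strictly increasing (a sum of $h$ and a non-decreasing term), and $\omega(h)\ge M(h)\ge\Phi(h)$ for all $h>0$, so $T\in F_\omega(\Omega;\R^m)$. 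The step I expect to be the main obstacle is the decay $\Phi(h)\to0$: since $T$ is only bounded and measurable there is no pointwise modulus of continuity to exploit, so the argument must route through $L^2(\rho)$-density; the other delicate point is the uniform lower bound on the normalization $Z_h(x)$ right up to $\partial\Omega$, where interior concentration is unavailable and one must invoke the regularity (measure-density) of $\Omega$.
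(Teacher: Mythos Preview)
Your proof is correct. The route differs from the paper's in one main respect: where the paper invokes Lusin's theorem to find a compact set $K_\e$ on which $T$ itself is continuous (and then splits the double integral over $(K_\e\times K_\e)\cap\{|x-x'|<\sqrt h\}$, $(K_\e\times K_\e)\cap\{|x-x'|\ge\sqrt h\}$, and $(K_\e\times K_\e)^c$), you instead use $L^2(\rho)$-density to write $T=g+r$ with $g$ continuous and $\|r\|_{L^2(\rho)}$ small, splitting the integrand via $|T(x)-T(x')|^2\le 2|g(x)-g(x')|^2+2|r(x)-r(x')|^2$. Both arguments rest on the same two uniform estimates you isolate---the lower bound $Z_h(x)\ge c_0 h^d$ from the measure-density of $\Omega$, and the ``dual'' bound $\int_\Omega K_h(x,x')Z_h(x)^{-1}\rho(x)\,dx\le C$---together with a near/far split for the continuous piece. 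Your version is arguably cleaner near $\partial\Omega$: the paper asserts that $h^{-d}Z_h(x)$ converges uniformly in $x$ to a multiple of $\rho(x)$, which is not quite right at the boundary, although the mere boundedness of the ratio (which is all that is actually needed, and which your measure-density argument delivers directly) suffices. You are also more explicit about passing from $\Phi(h)\to 0$ to a genuine continuous increasing majorant $\omega$; the paper stops at ``for all $h<\tilde h$ we have $\tilde{\mathrm A}_h[T]<C\e$'' and leaves the construction of $\omega$ implicit.
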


\begin{proof}
  By Lusin's theorem, and letting $\rho(K_\e^c) := \int_{\Omega \cap K_\e^c} \rho(x) \,dx$, for every $\e>0$ there exists a compact set $K_\e$ so that $\rho(K_\e^c) \leq \e$ and so that restricted to $K_\e$ we have that $T$ is continuous. As $T$ is continuous on $K_\e$, it is uniformly continuous on $K_\e$, and hence there exists a continuous, increasing function $\bar \omega_\e:\R^+ \to \R^+$ so that $|T(x) - T(x')| \leq \bar \omega_\e(|x-x'|)$ for all $x,x' \in K_\e$, and $\bar \omega_\e(0) = 0$. Defining $Q_h$ to be the set in the product space so that $|x-x'| < \sqrt{h}$, we may then estimate
  \begin{align*}
    \tilde{\mathrm{A}}_h[T] &\leq \big(\bar \omega_\e(\sqrt{h})\big)^2 \iint_{(K_\e\times K_\e) \cap Q_h} \frac{\exp\Big(\frac{-|x-x'|^2}{2h^2 \sigma_\kappa^2(x)}\Big)}{\int\exp\Big(\frac{-|x-x'|^2}{2h^2 \sigma_\kappa^2(x)}\Big) \rho(x') dx' } \rho(x)\rho(x') dx dx' \\
      &+ C\|T\|_\infty \iint_{(K_\e\times K_\e)^c} \frac{\exp\Big(\frac{-|x-x'|^2}{2h^2 \sigma_\kappa^2(x)}\Big)}{\int\exp\Big(\frac{-|x-x'|^2}{2h^2 \sigma_\kappa^2(x)}\Big) \rho(x') dx' } \rho(x)\rho(x') dx dx' \\
	  &+  C\|T\|_\infty \iint_{(K_\e\times K_\e) \cap Q_h^c} \frac{\exp\Big(\frac{-|x-x'|^2}{2h^2 \sigma_\kappa^2(x)}\Big)}{\int\exp\Big(\frac{-|x-x'|^2}{2h^2 \sigma_\kappa^2(x)}\Big) \rho(x') dx' } \rho(x)\rho(x') dx dx' \label{eqn:L1-est} %\\
      % &\leq \omega_\e(\sqrt{h})\tilde \mu(K_\e) + C\|T\|_\infty \tilde \mu(K_\e^c) + C\|T\|_\infty \frac{e^{-h^{-1}}}{h^d}.   
	    \end{align*}
	    The first term can be readily bounded by $\big(\bar \omega_\e(\sqrt{h})\big)^2 $, whereas the last term we may bound by $C\|T\|_\infty h^{-d}e^{-h^{-1}}$ since $\rho$ is bounded from below. The second term we can split into two integrals, one over $\Omega \times K_\e^c$ and the other over $K_\e^c \times \Omega$. We then notice that
\[
	      \int_{\Omega} \int_{K_\e^c} \frac{\exp\Big(\frac{-|x-x'|^2}{2h^2 \sigma_\kappa^2(x)}\Big)}{\int_\Omega\exp\Big(\frac{-|x-x'|^2}{2h^2 \sigma_\kappa^2(x)}\Big) \rho(x') dx' } \rho(x)\rho(x') dxdx' = \int_{K_\e^c} \rho(x) dx \leq \e.
\]
	    For the other integral, we compute
\[
	      \int_{K_\e^c} \int_{\Omega} \frac{\exp\Big(\frac{-|x-x'|^2}{2h^2 \sigma_\kappa^2(x)}\Big)}{\int\exp\Big(\frac{-|x-x'|^2}{2h^2 \sigma_\kappa^2(x)}\Big)\rho(x') dx' } \rho(x)\rho(x') dx dx'= \int_{K_\e^c} \hspace{-0.2cm}\rho(x')\hspace{-0.1cm}\int_{\Omega} \frac{1}{h^d} \exp\Big(\frac{-|x-x'|^2}{2h^2 \sigma_\kappa^2(x)}\Big )g(x) dx dx'
\]
where
\[
  g(x) = \frac{\rho(x)}{\tfrac{1}{h^d} \int_\Omega \exp\Big(\frac{-|x-x'|^2}{2h^2 \sigma_\kappa^2(x)}\Big) \,dx'}.
\]
As the denominator converges uniformly in $x$ to a multiple of the numerator, and both are bounded away from zero, we have that $g$ is bounded. By then integrating, we find that the last term is also bounded by a constant times $\e$.

Putting these facts together, we then have that there exists a $\tilde h$ so that for all $h < \tilde h$ we have that $\tilde{\mathrm{A}}_h[T] < C\e$, which completes the proof.
\end{proof}
Combining these two lemmas, we have proven Theorem \ref{lack_of_existence}.
\subsection{Consistency of the Rescaled Model}
The main goal of this section is to prove Theorem \ref{Main_result}. We first note by the law of large numbers the repulsion term converges to the integrated repulsion term in $\mathcal{KL}$, and hence all of the work in this section will go towards proving convergence of the attractive term. The main idea of the proof is relatively standard: we utilize Taylor approximations along with concentration inequalities to estimate tail probabilities. The analysis in many ways is analogous to the approach used to prove consistency for manifold learning and spectral embeddings, see for example \cite{6789755}.  %In order to first build intuition, we describe the proof ideas in  independent of the bandwidths, and hence our analysis will so. For simplicity of exposition, we will present the motivation for the attractive term in the deterministic case and save the technicalities of dealing with the stochastics for the later proof. 

Before stating those quantified convergence results, we will first define a few quantities. To begin, we point out that by setting\[
U_{ij} :=\frac{1}{h_n^{d+2}} \exp\left(\tfrac{-|X_i-X_j|^2}{2h_n^2\sigma_\kappa^2(X_i)}\right)\log(1+|T(X_i)-T(X_j)|^2),\quad V_{ij}:=\frac{1}{h_n^d}\exp\left(\tfrac{-|X_i-X_j|^2}{2h_n^2\sigma_\kappa^2(X_i)}\right).
\]we may express the attractive term in $\widehat{\mathrm{KL}}_n(T)$ as\[
\frac{\mathrm{A}_n[T]}{h_n^2}=\frac{1}{n}\sum_{i=1}^n\frac{\sum_{j=1}^n U_{ij}}{\sum_{j=1}^n V_{ij}}.
\]From this we observe that by changing variables  %\rwm{First line should be $\sigma^{d+2}$ and third should be $\sigma^d$, correct?}\rwm{Also, I guess this formula only holds for $X_i$ on the interior of $\Omega$, which is a measure one set.}
\begin{align} 
\mathbb{E}[U_{ij}|X_i]&=\sigma_\kappa^{d+2}(X_i)\int_{\frac{\Omega-X_i}{h_n}}\exp\left(\tfrac{-|v|^2}{2}\right)|DT(X_i)\cdot v|^2\rho(X_i+h_nv)dv+\O(h_n)\nonumber\\
&=(2\pi)^{d/2}\sigma_\kappa^{2+d}(X_i)\rho(X_i)\sum_{\ell=1}^m|\nabla T_\ell(X_i)|^2+\O(h_n), \label{eqn:uij-Taylor}\\
\mathbb{E}[V_{ij}|X_i]&=\sigma_\kappa^d(X_i)\int_{\frac{\Omega-X_i}{h_n}}\exp\left(\tfrac{-|v|^2}{2}\right)\rho(X_i+h_nv)dv\nonumber\\
&=(2\pi)^{d/2}\sigma_\kappa^d(X_i)\rho(X_i)+\O(h_n).\label{eqn:vij-Taylor}
\end{align}for and $X_i$ in the interior of $\Omega$. Here the order $h_n$ term is uniform for points which are uniformly separated from the boundary of $\Omega$. Additionally, it will be important in the proof to ensure that $\mathbb{E}[V_{ij}|X_i]$ is bounded from below by a constant independent of $h_n$, independent of $X_i$. %\rwm{This sentence sounds weird after the previous display: shouldn't that follow from the last equation if $\rho$ is bounded from below? Unless the $O(h_n)$ is meant to have a constant on it that can blow up at some $x$'s? Just clarify.}%
This can be bounded from below directly, as we have
\begin{equation} \label{eqn:lower-bound-vij}
  \mathbb{E}[V_{ij}|X=x]\geq\frac{\kappa^{2/d}}{2\pi e(\rho^*)^{2/d}}\frac{1}{\rho^*}\inf_{x\in \Omega}\int_{\frac{\Omega-x}{h_n}}e^{-|v|^2/2}dv=:\frac{1}{\tilde \rho},
\end{equation}
where this bound is independent of $h_n$ since the volume of the region $(\Omega-x)/h_n$ increases as $h_n\to 0$. We also note that $\mathbb{E}[U_{ij}|X_i]$ can be bounded above since
\begin{equation}\label{eqn:upper-bound-uij}
\mathbb{E}[U_{ij}|X_i]\leq(2\pi)^{d/2}\sup_{x\in \Omega}\left[\sigma_\kappa^{2+d}(x)\rho(x)\sum_{\ell=1}^m|\nabla T_\ell(x)|^2\right]+C=:\tilde \sigma
\end{equation}
as $\nabla T_i$ is bounded on $\bar \Omega$. Lastly, we also remark that the set
\[
\left\{\left|\frac{\sum_{j=1}^n U_{ij}}{\sum_{j=1}^n V_{ij}}-\frac{\mathbb{E}[U_{ij}|X_i]}{\mathbb{E}[V_{ij}|X_i]}\right|>\frac{\varepsilon}{2}\right\}%\\
%\subseteq &\left\{\left|\frac{1}{\tfrac{1}{n}\sum_{j=1}^n V_{ij}}\right|\cdot\left|\frac{1}{n}\sum_{j=1}^n U_{ij}-\mathbb{E}[U_{ij}|X_i]\right|>\frac{\varepsilon}{4}\right\}\cup\left\{\mathbb{E}[U_{ij}|X_i]\cdot\left|\frac{1}{\tfrac{1}{n}\sum_{j=1}^n V_{ij}}-\frac{1}{\mathbb{E}[V_{ij}|X_i]}\right|> \frac{\varepsilon}{4} \right\}
\]
can directly be shown to be a subset of 
%\subseteq &\left\{\left|\frac{1}{n}\sum_{j=1}^n U_{ij}-\mathbb{E}[U_{ij}|X_i]\right|>\frac{\varepsilon\tilde \rho^{-1}}{8}\right\}\cup\left\{\frac{1}{n}\sum_{j=1}^n V_{ij}<\frac{\tilde \rho^{-1}}{2}\right\}\cup\left\{\left|\frac{1}{n}\sum_{j=1}^n V_{ij}-\mathbb{E}[V_{ij}|X_i]\right|> \frac{\varepsilon\tilde\rho^{-2}}{8\tilde \sigma}\right\}\\
\begin{equation}\label{eqn:subset-bound}
 \left\{\left|\frac{1}{n}\sum_{j=1}^n U_{ij}-\mathbb{E}[U_{ij}|X_i]\right|>\frac{\varepsilon\tilde \rho^{-1}}{8}\right\}\cup\left\{\left|\frac{1}{n}\sum_{j=1}^n V_{ij}-\mathbb{E}[V_{ij}|X_i]\right|> \min\left\{\frac{\tilde \rho^{-1}}{2},\frac{\varepsilon\tilde\rho^{-2}}{8\tilde \sigma}\right\}\right\},
\end{equation}
where we have used \eqref{eqn:lower-bound-vij}. To simplify our notation, we define
%where the final line follows since $\mathbb{E}[V_{ij}|X_i]\geq \tilde \rho^{-1}$. To this end we will define the following quantities
\[
  \eta_{\varepsilon}:=\frac{\varepsilon\tilde\rho^{-1}}{8},\qquad \xi_{\varepsilon}:=\min\left\{\frac{\tilde \rho^{-1}}{2},\frac{\varepsilon\tilde\rho^{-2}}{8\tilde \sigma}\right\}.
\]
We now state the following proposition, which directly implies Theorem \ref{Main_result}. %\rwm{This $\delta$ is somewhat obfuscating, and doesn't show up in useful ways in the proposition. Couldn't we just let $\delta = \tilde \rho^{-1}/2$, and then replace every $\tilde \rho^{-1}-\delta$ with $\tilde \rho^{-1}/2$? It would simplify.}
\begin{prop}\label{prop:probabilistic_bounds}Let $T\in C^2(\bar \Omega;\R^m)$, where $\Omega$ is a bounded, $C^2$ domain and $\rho$ is a probability density bounded above and below on $\Omega$ . Then for every $\varepsilon>0$, and $0 < \alpha < 1$, there exist constants $C,\tilde C$ (only depending upon $\Omega$, $\rho$ and $\|T\|_{C^2}$) such that with probability at least \[
1-2\exp\left(\frac{-n\varepsilon^2}{C(1+\varepsilon/3)}\right)-2n\exp\left(\frac{-nh_n^d\eta_{\varepsilon}^2}{C(1+\eta_{\varepsilon}/3)}\right)-2n\exp\left(\frac{-nh_n^d\xi_{\varepsilon}^2}{C(1+\xi_{\varepsilon}/3)}\right),
\]
we have that
\[
  \left|\frac{\mathrm{A}_n[T]}{h_n^2}-\int_\Omega\sum_{\ell=1}^m \sigma_\kappa^2(x)|\nabla T_\ell(x)|^2\rho(x)dx\right|\leq \varepsilon+\tilde C h_n^\alpha.
\]In particular if $h_n\to 0$ and $nh_n^d/\log n\to \infty$, we have almost surely $\widehat{\mathrm{KL}}_n(T)\to\KL(T)$ for each fixed $T\in C^2(\bar\Omega;\R^m).$\end{prop}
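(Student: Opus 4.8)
The plan is to decompose the error $\left|\frac{\mathrm{A}_n[T]}{h_n^2} - \int_\Omega \sum_\ell \sigma_\kappa^2(x)|\nabla T_\ell(x)|^2 \rho(x)\,dx\right|$ into three conceptually distinct pieces and control each with its own tool. Write $S_i := \frac{\sum_j U_{ij}}{\sum_j V_{ij}}$ and $m_i := \frac{\mathbb{E}[U_{ij}|X_i]}{\mathbb{E}[V_{ij}|X_i]}$, so that $\frac{\mathrm{A}_n[T]}{h_n^2} = \frac1n\sum_i S_i$. First I would handle the \emph{bias}: by the Taylor expansions \eqref{eqn:uij-Taylor}--\eqref{eqn:vij-Taylor}, together with the lower bound \eqref{eqn:lower-bound-vij} on the denominator, one has $m_i = \sigma_\kappa^2(X_i)\sum_\ell |\nabla T_\ell(X_i)|^2 + \O(h_n)$ uniformly for $X_i$ away from $\partial\Omega$; the boundary layer has $\mu$-measure $\O(h_n)$ and contributes at most $\O(h_n)$ since all quantities are bounded. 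This is the source of the $\tilde C h_n^\alpha$ term (in fact $\O(h_n)$, but $h_n^\alpha$ is weaker and suffices). Second, the \emph{inner fluctuation}: for each fixed $i$, conditionally on $X_i$, the quantities $\frac1n\sum_j U_{ij}$ and $\frac1n\sum_j V_{ij}$ are averages of i.i.d.\ (in $j$) bounded-variance random variables, so Bernstein's inequality gives the bound on the event \eqref{eqn:subset-bound}; noting $|U_{ij}|, |V_{ij}| = \O(h_n^{-d})$ (times bounded factors, using $\log(1+|T(X_i)-T(X_j)|^2) \le C$) produces the exponents $-nh_n^d\eta_\varepsilon^2$ and $-nh_n^d\xi_\varepsilon^2$ after rescaling. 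A union bound over the $n$ choices of $i$ yields the two $2n\exp(\cdots)$ terms, and on the complement $|S_i - m_i| \le \varepsilon/2$ for every $i$. Third, the \emph{outer fluctuation}: $\frac1n\sum_i m_i$ concentrates around its expectation $\int_\Omega m(x)\rho(x)\,dx$ (with $m(x) = \mathbb{E}[U|X=x]/\mathbb{E}[V|X=x]$) by a single application of Bernstein to the bounded i.i.d.\ summands $m_i = m(X_i)$, giving the $2\exp(-n\varepsilon^2/(C(1+\varepsilon/3)))$ term; combined with the bias estimate this shows $\frac1n\sum_i m_i$ is within $\varepsilon/2 + \tilde C h_n^\alpha$ of the target integral. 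Adding the three contributions and adjusting constants gives the stated probability and error bound.

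For the final ``in particular'' claim, I would fix $T \in C^2(\bar\Omega;\R^m)$ and fix $\varepsilon > 0$. Under $h_n \to 0$ and $nh_n^d/\log n \to \infty$, each of the three exponential terms is summable in $n$: the first is $2e^{-cn}$, and for the other two one checks $n \exp(-c\, nh_n^d) = \exp(\log n - c\,nh_n^d) = \exp\big(-nh_n^d(c - \log n / (nh_n^d))\big)$, and since $nh_n^d/\log n \to \infty$ the bracket is eventually $\ge c/2$, while $nh_n^d \to \infty$ forces the whole expression to zero geometrically fast along, say, any polynomially-growing subsequence — more carefully, $nh_n^d \ge 2\log n / c$ eventually, so the term is $\le n^{-1}$ eventually, which is not quite summable, so I would instead note $nh_n^d / \log n \to \infty$ gives $nh_n^d \ge K \log n$ eventually for every $K$, hence the term is $\le n^{-cK}$ eventually; picking $K$ with $cK > 1$ makes it summable. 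By Borel--Cantelli, almost surely $\limsup_n \left|\frac{\mathrm{A}_n[T]}{h_n^2} - \int_\Omega \sum_\ell \sigma_\kappa^2 |\nabla T_\ell|^2 \rho\,dx\right| \le \varepsilon$. Taking a countable sequence $\varepsilon \downarrow 0$ and intersecting the full-measure events gives almost sure convergence of the attractive term. Finally, the repulsion term $\mathrm{R}_n[T]$ is a continuous (via the $\log$, using that the argument is bounded above by $1$ and below by a positive constant since $T$ is bounded on $\bar\Omega$) function of the double average $\frac{1}{n^2}\sum_{i,j}(1+|T(X_i)-T(X_j)|^2)^{-1}$, which is a $V$-statistic converging almost surely to $\iint (1+|T(x)-T(x')|^2)^{-1}\rho(x)\rho(x')\,dx\,dx'$ by the strong law for U-statistics; the continuous mapping theorem then gives $\mathrm{R}_n[T] \to \tilde{\mathrm{R}}[T]$ a.s., and adding the two pieces yields $\widehat{\mathrm{KL}}_n(T) \to \KL(T)$ a.s.

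The main obstacle is the inner-fluctuation step, specifically translating the event $\{|S_i - m_i| > \varepsilon/2\}$ into the union \eqref{eqn:subset-bound} of events about the numerator and denominator separately with the \emph{correct} thresholds $\eta_\varepsilon, \xi_\varepsilon$. One must use that the denominator $\frac1n\sum_j V_{ij}$ is, with high probability, bounded below by roughly $\frac{1}{2\tilde\rho}$ (forcing the $\tilde\rho^{-1}/2$ clause in $\xi_\varepsilon$), then estimate $|a/b - a'/b'| \le |a-a'|/b' + |a'|\,|b-b'|/(b b')$ with $a' = \mathbb{E}[U|X_i]$ bounded by $\tilde\sigma$ from \eqref{eqn:upper-bound-uij} and $b, b' \gtrsim \tilde\rho^{-1}$; this is the algebra behind the two clauses of $\xi_\varepsilon$ and the factor in $\eta_\varepsilon$. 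The rest is bookkeeping: getting the $h_n^d$ scaling right in Bernstein (the summands $U_{ij}$ have conditional variance $\O(h_n^{-d})$, not $\O(1)$, which is exactly why $nh_n^d$ rather than $n$ appears), and being careful that the Taylor remainders are uniform in $X_i$ over the interior, handling the $\O(h_n)$-measure boundary region crudely.
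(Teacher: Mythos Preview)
Your decomposition into bias, inner fluctuation (Bernstein conditionally on $X_i$, then union bound over $i$), and outer fluctuation (Bernstein on the bounded i.i.d.\ $m_i$) is exactly the paper's argument, as is the Borel--Cantelli/$V$-statistic treatment of the ``in particular'' claim.

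One step would fail as written. Your justification $\log(1+|T(X_i)-T(X_j)|^2)\le C$ gives only $|U_{ij}|\le Ch_n^{-(d+2)}$, and feeding that almost-sure bound into Bernstein produces an exponent of order $-nh_n^{d+2}$ once the $Mt/3$ term dominates, not the stated $-nh_n^d\eta_\varepsilon^2$. You need the sharper estimate $\log(1+|T(X_i)-T(X_j)|^2)\le C|X_i-X_j|^2$, which holds because $T$ is Lipschitz on $\bar\Omega$; then
\[
|U_{ij}|\le Ch_n^{-d}\cdot \frac{|X_i-X_j|^2}{h_n^2}\,\exp\!\Big(\tfrac{-|X_i-X_j|^2}{2h_n^2\sigma_\kappa^2(X_i)}\Big)\le Ch_n^{-d},
\]
since $s\mapsto s\,e^{-cs}$ is bounded. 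The paper does precisely this, and the same trick is needed for the conditional second moment to get $\mathrm{Var}(U_{ij}|X_i)=O(h_n^{-d})$. Separately, your parenthetical that the bias is ``in fact $O(h_n)$'' is slightly optimistic: the Taylor remainders in \eqref{eqn:uij-Taylor}--\eqref{eqn:vij-Taylor} are uniform only for $X_i$ whose rescaled domain $(\Omega-X_i)/h_n$ already captures essentially the full Gaussian mass, which fails for $X_i$ at distance merely $O(h_n)$ from $\partial\Omega$; this is why the paper excises a tube of width $h_n^\alpha$ with $\alpha<1$ rather than width $h_n$.
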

\begin{proof}%We first note that by the assumption that $T\in C^2(\Omega;\R^m)$ and that $\rho$ is bounded above and below, it follows that\[\lim_{h\to 0}\left|\int_{\Omega_h^c}\sum_{\ell=1}^m \sigma_\kappa^2(x)|\nabla T_\ell(x)|^2\rho(x)dx\right|\to 0\]
%  where $\Omega_h:=\{x\in \Omega : d(x,\Omega^c)>h\}$ \rwm{For the purposes of a quantified estimate we probably need to quantify the size of this set: if $\Omega$ is $C^1$ or better and $T \in C^1$ then we should be able to bound this by a constant times $h$?}. Throughout we will be estimating using integrals over $\Omega_h$ in order to be able to neglect effects due to the boundary. We then observe that %This way, it will suffice to validate the above statement on the region $(\Omega^{-h})^c$ and not concern ourselves with any potential issues at the boundary. In the remainder of the proof, we will omit this from notation in the region of integration for brevity. To this end, we observe
  To begin, we estimate
\begin{align}
  &\mathbb{P}\left(\left|\frac{\mathrm{A}_n[T]}{h_n^2}-\int_\Omega\sum_{\ell=1}^m \sigma_\kappa^2(x)|\nabla T_\ell(x)|^2\rho(x)dx\right|> \varepsilon+C_4h_n\right) \label{eqn:prob-est-main}\\
&\leq \mathbb{P}\left(\left|\frac{1}{n}\sum_{i=1}^n\frac{\sum_{j=1}^n U_{ij}}{\sum_{j=1}^n V_{ij}}-\sum_{i=1}^n\frac{\mathbb{E}[U_{ij}|X_i]}{\mathbb{E}[V_{ij}|X_i]}\right|> \varepsilon/2\right)\\
&+\mathbb{P}\left(\left|\frac{1}{n}\sum_{i=1}^n\frac{\mathbb{E}[U_{ij}|X_i]}{\mathbb{E}[V_{ij}|X_i]}- \int_\Omega \frac{\mathbb{E}[U_{ij}|x]}{\mathbb{E}[V_{ij}|x]}\rho(x) dx \right|> \varepsilon/2 \right)\\
&+\mathbb{P}\left(\left|\int_\Omega\frac{\mathbb{E}[U_{ij}|x]}{\mathbb{E}[V_{ij}|x]} \rho(x) dx-\int_\Omega\sum_{\ell=1}^m \sigma_\kappa^2(x)|\nabla T_\ell(x)|^2\rho(x)dx\right|> C_4 h_n^\alpha\right).
\end{align}
The last term here is actually deterministic, so we just must show that it is smaller than $C h_n^\alpha$ for some $C$. This can be directly justified by removing a $h_n^\alpha$ width tube from around the boundary of both integrals: the contribution from this tube can be bounded by a constant times $h_h^\alpha$ as we have that $\mathbb{E}[V_{ij}|x]$ and $\mathbb{E}[U_{ij}|x]$ are respectively bounded from below and above for any point in the closure of $\Omega$, along with the fact that $\Omega$ is a $C^2$ domain. The difference of the integrals in the region well-separated from the boundary can then be directly estimated to be of size $h_n$ by again using the upper and lower bounds along with the Taylor expansions \eqref{eqn:uij-Taylor} and \eqref{eqn:vij-Taylor}, which will hold uniformly in any region which is much more than distance $h_n$ from the boundary: for this reason it is convenient to use the power $\alpha$.

%The last of these can be controlled directly with Hoeffding's inequality by noting that \[
%\sum_{\ell=1}^m \sigma_\kappa^2(X_i)|\nabla T_\ell(X_i)|^2\leq C\|DT\|_\infty^2=:C_0
%\]hence %\rwm{Defining the previous quantity here as $4C_0$ is a bit convoluted, and still leaves a constant in the exponential. Also, can you double check the 2 in the denominator of the exponential here?}
%\begin{align*}
%\mathbb{P}\left(\left|\frac{1}{n}\sum_{i=1}^n\sum_{\ell=1}^m \sigma_\kappa^2(X_i)|\nabla T_\ell(X_i)|^2-\int\sum_{\ell=1}^m \sigma_\kappa^2(x)|\nabla T_\ell(x)|^2\rho(x)dx\right|> \varepsilon/2\right)\leq 2\exp\left(\frac{-n\varepsilon^2}{8 C_0^2}\right).
%\end{align*}
\iffalse
To bound the other piece, fix $i\in\{1,\dots,n\}$ and denote by $U_{i1},\dots U_{in}$ and $V_{i1},\dots ,V_{in}$ the variables \[
U_{ij} :=\frac{1}{h^{d+2}} \exp\left(\tfrac{-|X_i-X_j|^2}{2h^2\sigma^2(X_i)}\right)\log(1+|T(X_i)-T(X_j)|^2),\qquad V_{ij}:=\frac{1}{h^d}\exp\left(\tfrac{-|X_i-X_j|^2}{2h^2\sigma^2(X_i)}\right)
\]so that \[
\frac{A_n^h[T](X_i)}{h^2}=\frac{\sum_{j=1}^n U_{ij}}{\sum_{j=1}^n V_{ij}}.
\]We now compute\begin{align*}
\mathbb{E}[U_{ij}|X_i]&=\frac{1}{h^{d+2}}\int \exp\left(\tfrac{-|X_i-\tilde x|^2}{2h^2\sigma^2(X_i)}\right)\log(1+|T(X_i)-T(\tilde x)|^2)\rho(\tilde x)d\tilde x\\
&=\sigma^2(X_i)\int\exp\left(\tfrac{-|v|^2}{2}\right)|DT(X_i)\cdot v|^2\rho(X_i+hv)dv+\O(h)\\
\mathbb{E}[V_{ij}|X_i]&=\frac{1}{h^{d}}\int \exp\left(\tfrac{-|X_i-\tilde x|^2}{2h^2\sigma^2(X_i)}\right)\rho(\tilde x)d\tilde x\\
&=\sigma^2(X_i)\int\exp\left(\tfrac{-|v|^2}{2}\right)\rho(X_i+hv)dv+\O(h)
\end{align*}\fi 
To bound the first term in \eqref{eqn:prob-est-main}, by \eqref{eqn:subset-bound} we may bound\begin{align}\label{quotient_bound}
\mathbb{P}\left(\left|\frac{\sum_{j=1}^n U_{ij}}{\sum_{j=1}^n V_{ij}}-\frac{\mathbb{E}[U_{ij}|X_i]}{\mathbb{E}[V_{ij}|X_i]}\right|>\varepsilon/2\right)&\leq \mathbb{P}\left(\left|\frac{1}{n}\sum_{j=1}^n U_{ij}-\mathbb{E}[U_{ij}|X_i]\right|>\eta_{\varepsilon}\right)\nonumber\\
&+\mathbb{P}\left(\left|\frac{1}{n}\sum_{j=1}^n V_{ij}-\mathbb{E}[V_{ij}|X_i]\right|>\xi_{\varepsilon}\right).
\end{align}
In the following, we let the constant $C$ vary from line to line for convenience. To estimate the above quantities we note that for $h_n<1$, we estimate with Taylor expansion \begin{align*}
U_{ij}&=\frac{1}{h_n^{d+2}} \exp\left(\tfrac{-|X_i-X_j|^2}{2h_n^2\sigma^2(X_i)}\right)\log(1+|T(X_i)-T(X_i+(X_j-X_i))|^2)\\
&\leq\frac{1}{h_n^d}\exp\left(\tfrac{-|X_i-X_j|^2}{2h_n^2\sigma^2(X_i)}\right)|DT(X_i)|^2\cdot\frac{|X_j-X_i|^2}{h_n^2}+C h_n^{1-d} \\
&\leq\frac{2}{eh_n^d}\|\sigma\|_\infty^2 \|DT\|_\infty^2+Ch_n^{1-d}\leq Ch_n^{-d}\end{align*}and furthermore,\begin{align*}
\mathrm{Var}(U_{ij})&\leq \mathbb{E}[U_{ij}^2]\\
&=\frac{1}{h_n^{2d+4}}\int \exp\left(\tfrac{-|X_i-\tilde x|^2}{h_n^2\sigma^2(X_i)}\right)\big(\log(1+|T(X_i)-T(\tilde x)|^2)\big)^2\rho(\tilde x)d\tilde x\\
&=\frac{1}{h_n^d}\int\exp\left(-|v|^2\right)|DT(X_i)\cdot v|^4\rho(X_i+h_nv)dv+\O(h_n^{1-d})\leq C h_n^{-d}\end{align*}
Similarly, we can show $V_{ij}\leq h_n^{-d}$ and $\mathrm{Var}(V_{ij})\leq C h_n^{-d}.$ Thus with \eqref{quotient_bound} and Bernstein's inequality, we see that \begin{align*}
\mathbb{P}\left(\left|\frac{\sum_{j=1}^n U_{ij}}{\sum_{j=1}^n V_{ij}}-\frac{\mathbb{E}[U_{ij}|X_i]}{\mathbb{E}[V_{ij}|X_i]}\right|>\varepsilon/2\right)\leq 2\exp\left(\frac{-nh_n^d\eta_{\varepsilon,\delta}^2}{C(1+\eta_{\varepsilon,\delta}/3)}\right)+2\exp\left(\frac{-nh_n^d\xi_{\varepsilon,\delta}^2}{C(1+\xi_{\varepsilon,\delta}/3)}\right),
\end{align*}
and furthermore a union bound will imply \[
  \mathbb{P}\left(\left|\frac{\mathrm{A}_n[T]}{h_n^2}-\frac{1}{n}\sum_{i=1}^n\frac{\mathbb{E}[U_{ij}|X_i]}{\mathbb{E}[V_{ij}|X_i]}\right|>\frac{\varepsilon}{2}\right)\leq 2n\exp\left(\frac{-nh_n^d\eta_{\varepsilon,\delta}^2}{C(1+\eta_{\varepsilon,\delta}/3)}\right)+2n\exp\left(\frac{-nh_n^d\xi_{\varepsilon,\delta}^2}{C(1+\xi_{\varepsilon,\delta}/3)}\right).\]
\iffalse
\begin{align*}
\mathbb{P}\left(\left|\frac{1}{n}\sum_{j=1}^n\left(\frac{A_n^h[T](X_i)}{h^2}-\frac{\mathbb{E}[U_{ij}|X_i]}{\mathbb{E}[V_{ij}|X_i]}\right)\right|>\varepsilon/2\right)&\leq2n\exp\left(\frac{-n\eta_{\varepsilon,\delta}^2h^d}{2C_2(\sigma,T,\Omega)+2\eta_{\varepsilon,\delta}C(\sigma,T,\Omega)/3}\right)\\
&+2n\exp\left(\frac{-n\xi_{\varepsilon,\delta}^2h^d}{2+2\xi_{\varepsilon,\delta}C(\sigma,\Omega)/3}\right).
\end{align*}

Finally we note that since,\[
\mathbb{E}[V_{ij}|X=x]= \sigma_{\kappa}^2(x)\int_{\frac{\Omega-x}{h}}e^{-|v|^2/2}\rho(x+hv)dv\geq\frac{\kappa^{2/d}}{2\pi e(\rho^*)^{2/d}}\frac{1}{\rho^*}\inf_{x\in\Omega}\int_{\frac{\Omega-x}{h}}e^{-|v|^2/2}dv=:\frac{1}{\tilde \rho}
\]where this bound is independent of $h$ since the volume of the region $(\Omega-x)/h$ increases as $h\to 0$. \fi
Finally, for the middle term in \eqref{eqn:prob-est-main} we notice that by \eqref{eqn:lower-bound-vij} and \eqref{eqn:upper-bound-uij} we can directly apply Bernstein's inequality to obtain that
\[
  \left|\frac{1}{n}\sum_{i=1}^n\frac{\mathbb{E}[U_{ij}|X_i]}{\mathbb{E}[V_{ij}|X_i]}- \int_\Omega  \frac{\mathbb{E}[U_{ij}|x]}{\mathbb{E}[V_{ij}|x]}\rho(x)dx \right| > \frac{\varepsilon}{2}
\]
with probability at most
\[
  2 \exp\left( -\frac{n \e^2}{8(\frac{\tilde \sigma^2}{\tilde \rho^2} + 1/3\frac{\tilde \sigma}{\tilde \rho} \e )} \right).
\]
This concludes the proof.
\end{proof}

\subsection{Minimizers of rescaled tSNE}

We now give a brief proof of the existence of minimizers to the limiting, rescaled, population-level energy: the minimizer of this energy should be a candidate for the large-data limit of our rescaled variant of tSNE. The proofs in this section are standard in the Calculus of variations, see for example the standard reference works \cite{dacorogna2007direct,giusti2003direct} which give a more detailed treatment.

Because our densities are bounded from above and below on $\Omega$, it suffices for us to consider the unweighted function spaces $L^2(\Omega;\R^m)$ and $H^1(\Omega;\R^m)$, which are defined by the space of measurable functions from $\Omega \to \R^m$ equipped with the norms
\begin{displaymath}
  \|T\|_{L^2}^2 := \int_\Omega |T(x)|^2 \,dx \qquad \|T\|_{H^1}^2 := \|T\|_{L^2}^2 +  \int_\Omega \sum_{\ell=1}^m |\nabla T_\ell(x)|^2 \,dx.
\end{displaymath}

We recall the classical Sobolev embedding theorem and Rellich-Kondrakov compactness theorem:

\begin{theorem}[Poincar\'e-Wirtinger Inequality]
  Let $\Omega$ be a $C^1$ smooth, bounded, connected domain in $R^d$. Then there exists a constant $C$ such that for any function $u:\Omega \to \R$ such that $\int_\Omega |\nabla u|^2 \,dx<\infty$ we have that
 % \begin{displaymath}
 %   \left\|u-\int_\Omega u\,dx \right\|_{p^*} \leq C \|\nabla u\|_p
 % \end{displaymath}
 % where $\frac{1}{p^*} = \frac{1}{p} - \frac{1}{d}$. In particular, using the boundedness of our domain, for any $d$ we have
  \begin{displaymath}
    \left\|u-\int_\Omega u \,dx \right\|_2 \leq C\|\nabla u\|_2.
  \end{displaymath}
  \label{thm:sobolev-embedding}
\end{theorem}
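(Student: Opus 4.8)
The plan is to prove this by the classical compactness (``Rellich'') argument, exactly as in the standard references \cite{dacorogna2007direct,giusti2003direct}. Write $\bar u := \int_\Omega u\,dx$ for the quantity subtracted in the statement. Suppose, for contradiction, that no such constant $C$ exists. Then for each $n \in \mathbb{N}$ there is a function $u_n$ with $\int_\Omega |\nabla u_n|^2\,dx < \infty$ but $\|u_n - \bar u_n\|_2 > n \|\nabla u_n\|_2$. In particular $\|\nabla u_n\|_2 > 0$, so we may renormalize and set $v_n := (u_n - \bar u_n)/\|u_n - \bar u_n\|_2$. By construction the $v_n$ satisfy $\bar v_n = 0$, $\|v_n\|_2 = 1$, and $\|\nabla v_n\|_2 < 1/n \to 0$.

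Consequently $(v_n)$ is bounded in $H^1(\Omega)$. Since $\Omega$ is bounded with $C^1$ boundary, the Rellich--Kondrachov theorem applies: after extracting a subsequence (not relabeled), $v_n \to v$ strongly in $L^2(\Omega)$ for some $v \in L^2(\Omega)$. Strong $L^2$ convergence preserves the two constraints, so $\|v\|_2 = 1$ and $\bar v = 0$. At the same time $\nabla v_n \to 0$ in $L^2(\Omega;\R^d)$; testing the distributional identity $\int_\Omega v_n\, \partial_k \varphi\, dx = -\int_\Omega (\partial_k v_n)\,\varphi\, dx$ against $\varphi \in C_c^\infty(\Omega)$ and passing to the limit (using strong convergence of $v_n$ on the left and of $\nabla v_n$ on the right) shows $v \in H^1(\Omega)$ with $\nabla v = 0$ a.e.

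It remains to observe that an $H^1$ function with a.e.\ vanishing gradient on the connected open set $\Omega$ must agree a.e.\ with a constant; this is the one place where connectedness of $\Omega$ is used (on a disconnected domain the inequality genuinely fails, since one may take $v$ to be different constants on different components). Hence $v \equiv c$ for some constant $c$, and the constraint $\bar v = c|\Omega| = 0$ forces $c = 0$, i.e.\ $v = 0$ a.e., contradicting $\|v\|_2 = 1$. This contradiction establishes the inequality.

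\textbf{Main obstacle.} There is essentially no obstacle here: the argument is entirely standard, and its two non-elementary ingredients --- the Rellich--Kondrachov compactness theorem (which is where the $C^1$, or merely Lipschitz, regularity of $\partial\Omega$ enters, via an extension operator) and the ``vanishing gradient on a connected open set implies constant'' fact --- can both be quoted verbatim from standard references.
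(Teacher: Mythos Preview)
The paper does not actually prove this theorem: it is simply recalled, together with the Rellich--Kondrachov theorem, as a classical tool and attributed to the standard references \cite{dacorogna2007direct,giusti2003direct}. Your compactness-contradiction argument is exactly the standard proof found in those references and is correct.

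One small remark on normalization: both the paper's statement and your write-up take $\bar u = \int_\Omega u\,dx$ rather than the average $|\Omega|^{-1}\int_\Omega u\,dx$. Read literally, the inequality then fails for nonzero constants whenever $|\Omega|\neq 1$, and your claim that $\bar v_n = 0$ does not hold (since $\int_\Omega (u_n-\bar u_n)\,dx = (1-|\Omega|)\bar u_n$). Interpreting $\bar u$ as the mean value --- which is clearly what is intended --- restores both the truth of the statement and the validity of every step in your argument.
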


\begin{theorem}[Rellich-Kondrachov Theorem]
  Suppose that $\|u_n\|_{H^1} < C$. Then there exists a subsequence (not relabeled) and a limit $u^* \in H^1$ so that $u_n \to u^*$ in $L^2$ and so that 
  \begin{equation}
    \liminf \int_\Omega |\nabla u_n|^2\,dx \geq \int_\Omega |\nabla u^*|^2\,dx.
  \end{equation}
  \label{thm:R-K}
\end{theorem}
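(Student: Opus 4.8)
The statement bundles two classical facts: compactness of the embedding $H^1(\Omega)\hookrightarrow L^2(\Omega)$ on a bounded domain with sufficiently regular (say $C^1$) boundary, and weak lower semicontinuity of the Dirichlet energy. The plan is to establish these in sequence, treating $u_n$ as scalar-valued (the vector-valued case follows componentwise).

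First I would reduce to $\R^d$ and record a translation estimate. Using a bounded extension operator $E:H^1(\Omega)\to H^1(\R^d)$, which exists since $\partial\Omega$ is $C^1$, with $Eu=u$ on $\Omega$, the support of $Eu$ contained in a fixed bounded neighborhood $\Omega'$ of $\overline\Omega$, and $\|Eu\|_{H^1(\R^d)}\le C\|u\|_{H^1(\Omega)}$, I set $v_n:=Eu_n$; these are uniformly bounded in $H^1(\R^d)$ and uniformly supported in $\Omega'$. The quantitative input is the estimate
\begin{equation*}
  \|v(\cdot+y)-v\|_{L^2(\R^d)}\le |y|\,\|\nabla v\|_{L^2(\R^d)}\qquad (v\in H^1(\R^d),\ y\in\R^d),
\end{equation*}
proved for $v\in C_c^\infty$ from $v(x+y)-v(x)=\int_0^1 y\cdot\nabla v(x+ty)\,dt$ and Minkowski's integral inequality, then extended by density.

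Next I would obtain strong $L^2$ compactness via the Fr\'echet--Kolmogorov criterion: a bounded subset of $L^2(\R^d)$ is precompact provided its translates are uniformly small in $L^2$ and no mass escapes to infinity. The second condition is automatic since each $v_n$ is supported in the fixed bounded set $\Omega'$, and the first follows from the translation estimate together with the uniform bound on $\|\nabla v_n\|_{L^2}$. Hence $\{v_n\}$ is precompact in $L^2(\R^d)$, and after passing to a subsequence (not relabeled) $v_n\to u^*$ in $L^2(\R^d)$, so in particular $u_n\to u^*$ in $L^2(\Omega)$. (If one prefers to avoid quoting Fr\'echet--Kolmogorov: for a standard mollifier $\eta_\varepsilon$ the family $\{v_n*\eta_\varepsilon\}_n$ is uniformly bounded and equi-Lipschitz, hence precompact in $C^0(\overline{\Omega'})$ by Arzel\`a--Ascoli, while $\|v_n*\eta_\varepsilon-v_n\|_{L^2}\le\varepsilon\|\nabla v_n\|_{L^2}$ by the translation estimate; a diagonal argument over $\varepsilon=1/k$ then yields a subsequence Cauchy in $L^2$.) Passing to a further subsequence and using reflexivity of $L^2$, I also arrange $\nabla u_n\rightharpoonup g$ weakly in $L^2$; testing $\int_\Omega u_n\,\partial_i\varphi\,dx=-\int_\Omega(\partial_i u_n)\varphi\,dx$ against $\varphi\in C_c^\infty(\Omega)$ and sending $n\to\infty$ (left side by strong $L^2$ convergence, right side by weak convergence of the gradients) identifies $g=\nabla u^*$, so $u^*\in H^1$. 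Finally, weak lower semicontinuity of the $L^2$ norm, e.g.\ from $\|\nabla u^*\|_{L^2}^2=\lim_n\langle\nabla u_n,\nabla u^*\rangle\le\|\nabla u^*\|_{L^2}\liminf_n\|\nabla u_n\|_{L^2}$, gives $\int_\Omega|\nabla u^*|^2\,dx\le\liminf_n\int_\Omega|\nabla u_n|^2\,dx$ along this subsequence, which is the asserted inequality.

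The \textbf{main obstacle} is the compactness in the middle step: constructing (or circumventing, via boundary flattening and a partition of unity) the extension operator, and verifying the uniform translation continuity required by Fr\'echet--Kolmogorov, are precisely where the geometric hypothesis on $\Omega$ is used. Once strong $L^2$ convergence of a subsequence is in hand, identifying the weak limit of the gradients and invoking convexity of $v\mapsto\|v\|_{L^2}^2$ is routine.
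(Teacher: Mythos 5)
Your proof is correct: the extension to $H^1(\R^d)$, the translation estimate feeding into the Fr\'echet--Kolmogorov (or mollification plus Arzel\`a--Ascoli) compactness argument, the identification of the weak limit of the gradients via test functions, and the Cauchy--Schwarz weak lower semicontinuity step are all sound, and they use exactly the $C^1$, bounded, connected hypotheses the paper imposes on $\Omega$. Note that the paper itself offers no proof of this statement --- it is recalled as a classical theorem with the standard references cited --- so your argument simply supplies the standard textbook proof of what the paper takes as known; there is no methodological divergence to flag.
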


We can now combine these results to obtain the existence of minimizers of our limiting energy.

\begin{theorem}
  Let $\Omega$ be a $C^1$ bounded, connected domain in $\R^d$ and consider the energy \eqref{continuum_energy}, namely
  \begin{align*}
    \KL[T] &= \frac{\kappa^{2/d}}{2\pi e} \int_\Omega \sum_{\ell=1}^m |\nabla T_\ell(x)|^2 \rho(x)\,dx + \log\left(\iint_{\Omega\times\Omega} \frac{1}{1 + |T(x) - T(x')|^2}  \rho(x) \rho(x') \,dx \,dx'\right) \\
    &=: \mathrm{ A}[T] + \mathrm{ R}[T] .
  \end{align*}
  Then there exists a minimizer $T^* \in H^1(\Omega ; \R^m)$ of the energy $\KL$.
  \label{thm:min-exist-rescaled-energy}
\end{theorem}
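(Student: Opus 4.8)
The plan is to use the direct method of the calculus of variations. First I would establish that the energy $\KL$ is bounded from below. The attractive term $\mathrm{A}[T] = \frac{\kappa^{2/d}}{2\pi e}\int_\Omega \sum_\ell |\nabla T_\ell|^2 \rho\,dx$ is manifestly nonnegative, and since the integrand defining $\mathrm{R}[T]$ satisfies $0 < (1+|T(x)-T(x')|^2)^{-1} \leq 1$ and $\rho$ is a probability density, the argument of the logarithm lies in $(0,1]$, so $\mathrm{R}[T] \leq 0$; more importantly I need a \emph{lower} bound on $\mathrm{R}$. This is where the main obstacle lies: $\mathrm{R}[T] \to -\infty$ as $\|T\|$ grows without bound, so I must show that along a minimizing sequence the competition with $\mathrm{A}$ forces the relevant part of $T$ to stay bounded. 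The key observation is that $\KL$ is invariant under translations $T \mapsto T + c$, so without loss of generality I may assume $\int_\Omega T_\ell \, dx = 0$ for every $\ell$ (or more precisely, translate each element of a minimizing sequence). Then I would argue: if $\mathrm{A}[T]$ stays bounded along the minimizing sequence, the Poincar\'e--Wirtinger inequality (Theorem \ref{thm:sobolev-embedding}, applied componentwise, using $1/\rho^* < \rho < \rho^*$ to pass between weighted and unweighted norms) gives a uniform $L^2$ bound, hence a uniform $H^1$ bound; and $\mathrm{A}[T]$ must stay bounded because otherwise $\KL[T] \to +\infty$ (since $\mathrm{R}$ is controlled below — see next paragraph — it cannot cancel an unbounded positive $\mathrm{A}$).

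To get the lower bound on $\mathrm{R}$ that closes the previous argument, I would proceed as follows. For a mean-zero $T$ with $\mathrm{A}[T] \leq M$, Poincar\'e--Wirtinger gives $\|T\|_{L^2}^2 \leq C M$, hence $\iint |T(x)-T(x')|^2 \rho(x)\rho(x')\,dx\,dx' \leq 4\|T\|_{L^2(\rho)}^2 \leq C' M$. By Jensen's inequality applied to the concave function $s \mapsto \log\frac{1}{1+s}$ against the probability measure $\rho(x)\rho(x')\,dx\,dx'$,
\begin{displaymath}
  \mathrm{R}[T] = \iint \log\frac{1}{1+|T(x)-T(x')|^2}\rho(x)\rho(x')\,dx\,dx' \;\geq\; \log\frac{1}{1 + \iint |T(x)-T(x')|^2 \rho(x)\rho(x')\,dx\,dx'} \;\geq\; -\log(1 + C' M).
\end{displaymath}
Since $-\log(1+C'M)$ grows only logarithmically while $\mathrm{A}[T] \geq$ (a constant times) $M$ when the Dirichlet energy is of order $M$, this shows $\KL$ is coercive on the mean-zero subspace in the sense that $\KL[T] \to +\infty$ as $\|\nabla T\|_{L^2} \to \infty$, and in particular $\inf \KL > -\infty$. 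Therefore a minimizing sequence $T_n$ (taken mean-zero) has $\sup_n \|T_n\|_{H^1} < \infty$.

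Having a bounded minimizing sequence, I would apply Rellich--Kondrachov (Theorem \ref{thm:R-K}) componentwise to extract a subsequence (not relabeled) with $T_n \to T^*$ in $L^2(\Omega;\R^m)$ and $\liminf_n \mathrm{A}[T_n] \geq \mathrm{A}[T^*]$ (lower semicontinuity of the Dirichlet energy; the weight $\rho$ being bounded above and below does not disturb this, or one can invoke weak $H^1$ lower semicontinuity directly). For the repulsive term, $L^2$ convergence gives (along a further subsequence) $T_n \to T^*$ pointwise a.e., hence $(1+|T_n(x)-T_n(x')|^2)^{-1} \to (1+|T^*(x)-T^*(x')|^2)^{-1}$ a.e. on $\Omega\times\Omega$, and since these integrands are bounded by $1$, the dominated convergence theorem yields $\iint (1+|T_n(x)-T_n(x')|^2)^{-1}\rho\rho \to \iint (1+|T^*(x)-T^*(x')|^2)^{-1}\rho\rho$; continuity of $\log$ then gives $\mathrm{R}[T_n] \to \mathrm{R}[T^*]$. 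Combining, $\KL[T^*] \leq \liminf_n \KL[T_n] = \inf \KL$, so $T^*$ is a minimizer. The only genuinely delicate point, as noted, is ensuring the minimizing sequence does not run off to infinity, which is handled by the translation normalization together with the logarithmic-versus-linear growth comparison above; everything else is routine.
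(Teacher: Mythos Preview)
Your argument is correct and follows the paper's route: direct method with translation normalization and Poincar\'e--Wirtinger to obtain coercivity, then Rellich--Kondrachov for compactness, weak lower semicontinuity of the Dirichlet term $\mathrm{A}$, and dominated convergence for $\mathrm{R}$. One small slip to correct: you wrote $\mathrm{R}[T]=\iint\log\frac{1}{1+|\cdot|^2}\,\rho\rho$, whereas by definition $\mathrm{R}[T]=\log\iint\frac{1}{1+|\cdot|^2}\,\rho\rho$, and the map $s\mapsto\log\frac{1}{1+s}$ is convex rather than concave --- your final bound $\mathrm{R}[T]\geq -\log(1+C'M)$ is nonetheless valid (apply Jensen to the convex map $s\mapsto(1+s)^{-1}$ and then take the logarithm).
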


\begin{proof}
  We first notice that both $\mathrm{ A}$ and $\mathrm{ R}$ are translation invariant. Furthermore, by using Jensen's inequality and the fact that, for any $\beta > 0$ we have $\log(1+x) \leq \beta^{-1} + \log(\beta/e) + x/\beta$, we can estimate
  \begin{displaymath}
    \mathrm{R}[T] \geq C_\beta + \frac{C}{\beta} \int |T(x)|^2 \,dx.
  \end{displaymath}
  Using our Poincar\'e inequality we can then infer that, after centering by subtracting $\int_\Omega T$ and picking $\beta$ sufficiently small,
  \[
    \KL[T] \geq C \|T\|_{H^1(\Omega; \R^m)} - C.
  \]
  We also notice that $\KL[0] < \infty$. This implies that there exists a sequence of centered $T_n$ so that $\KL[T_n] \to \inf_{H^1} \KL$ and $\|T_n\|_{H^1(\Omega;\R^m)} < C$. The Rellich-Kondrachov theorem then implies that $T_n \to T^*$ in $L^2(\Omega)$ for some $T^* \in H^1(\Omega; \R^m)$ and that $\mathrm{ A}[T^*] \leq \liminf_n \mathrm{ A}[T_n]$. The dominated convergence theorem along with the $L^2$ convergence of the $T_n$ implies that $\mathrm{ R}[T_n] \to \mathrm{ R}[T^*]$. These facts together imply that $\KL[T^*] \leq \liminf_n \KL[T_n]$, which proves that $T^*$ is a minimizer.
\end{proof}

Finally, by considering $\KL[T^* + t \phi]$ for arbitrary $\phi$ and taking $t \to 0$ we can immediately infer the following weak form of the Euler-Lagrange equation:

\begin{proposition}
  Any minimizer $T^*$ of $\mathcal{KL}$ will satisfy
  \begin{align*}
    0 &= \frac{\kappa^{2/d}}{2\pi e} \int_\Omega \sum_{\ell=1}^m \nabla T^*_\ell(x) \cdot \nabla \phi_\ell(x) \rho^{1-2/d}(x) dx \\
    &- \frac{\int_\Omega \int_\Omega 2(\phi(x) - \phi(x')) \cdot (T^*(x) - T^*(x')) (1 + |T^*(x) - T^*(x')|^2)^{-2}\rho(x) \rho(x')dx dx'}{\int_\Omega \int_\Omega (1 + |T^*(x) - T^*(x')|)^{-1} \rho(x) \rho(x')dx dx'}
      \end{align*}
  for any $\phi \in H^1(\Omega ; \R^m)$.
\end{proposition}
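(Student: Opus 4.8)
The plan is to justify the formal first-variation computation announced in the sentence preceding the statement. Fix a minimizer $T^* \in H^1(\Omega;\R^m)$ and an arbitrary $\phi \in H^1(\Omega;\R^m)$, and set $g(t) := \KL[T^* + t\phi]$ for $t \in \R$. First I would observe that $g(t)$ is finite for every $t$: the Dirichlet term is finite because $\rho^{1-2/d}$ is bounded above and below (so the integral is comparable to $\|\nabla(T^*+t\phi)\|_{L^2}^2 < \infty$), and the repulsion term is finite because its integrand is strictly positive a.e.\ and dominated by $\rho(x)\rho(x')$, so the inner integral lies in $(0,1]$. Since $T^*$ minimizes $\KL$, we have $g(t) \geq g(0)$ for all $t$; hence it suffices to show $g$ is differentiable at $t=0$ and to identify $g'(0)$, the conclusion following from $g'(0)=0$ after dividing by $2$.

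For the attractive part, writing $\mathrm{A}$ as in Theorem~\ref{thm:min-exist-rescaled-energy} and expanding the square gives the exact identity
\[
  \mathrm{A}[T^*+t\phi] = \mathrm{A}[T^*] + t\,\frac{\kappa^{2/d}}{2\pi e}\int_\Omega \sum_{\ell=1}^m 2\,\nabla T^*_\ell(x)\cdot\nabla\phi_\ell(x)\,\rho^{1-2/d}(x)\,dx + t^2\,\mathrm{A}[\phi],
\]
with every term finite by the above, so $t\mapsto\mathrm{A}[T^*+t\phi]$ is a quadratic polynomial whose derivative at $0$ is the coefficient of $t$. For the repulsive part, write $\mathrm{R}[T]=\log N[T]$ with $N[T]:=\iint_{\Omega\times\Omega}(1+|T(x)-T(x')|^2)^{-1}\rho(x)\rho(x')\,dx\,dx'$; as noted, $0 < N[T^*+t\phi] \le 1$, so $\log$ is $C^1$ near these values. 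Setting $a:=T^*(x)-T^*(x')$ and $b:=\phi(x)-\phi(x')$, the $t$-derivative of the integrand is $-2(a+tb)\cdot b\,(1+|a+tb|^2)^{-2}\rho(x)\rho(x')$, bounded in absolute value uniformly in $t$ by $|b|\,\rho(x)\rho(x')$ using $2|w|(1+|w|^2)^{-2}\le 1$; since $\phi\in H^1\subset L^1$ and $\Omega$ is bounded, $(x,x')\mapsto|\phi(x)-\phi(x')|\rho(x)\rho(x')$ is integrable on $\Omega\times\Omega$. The dominated convergence theorem (applied to the difference quotients, via the mean value theorem in $t$) then justifies differentiating under the integral and yields
\[
  \tfrac{d}{dt}\big|_{t=0}N[T^*+t\phi] = -\iint_{\Omega\times\Omega} 2\big(T^*(x)-T^*(x')\big)\cdot\big(\phi(x)-\phi(x')\big)\big(1+|T^*(x)-T^*(x')|^2\big)^{-2}\rho(x)\rho(x')\,dx\,dx',
\]
and the chain rule gives $\tfrac{d}{dt}\big|_{t=0}\mathrm{R}[T^*+t\phi]$ as this quantity divided by $N[T^*]$.

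Adding the two contributions yields $g'(0)$; setting $g'(0)=0$ and dividing by $2$ produces exactly the asserted weak Euler--Lagrange identity. I would remark that no centering of $\phi$ is needed, since both $\mathrm{A}$ and $\mathrm{R}$ are defined on all of $H^1(\Omega;\R^m)$, so the variation is legitimate for every admissible $\phi$. The only genuinely analytic point is the domination estimate used to differentiate $\mathrm{R}$ under the integral sign; everything else is elementary algebra and the chain rule, so I do not expect any real obstacle here.
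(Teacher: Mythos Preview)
Your proposal is correct and is exactly the approach the paper uses: the paper gives no detailed proof but simply says the proposition follows ``by considering $\KL[T^* + t \phi]$ for arbitrary $\phi$ and taking $t \to 0$,'' and your argument carries this out, supplying the domination estimate needed to differentiate the repulsion term under the integral that the paper leaves implicit.
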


As $\phi$ is arbitrary, we can integrate by parts to obtain the equality \eqref{necessary_condition}, in the sense of $H^1$ functions. In order to prove Theorem \ref{regularity}, it only remains to resolve the regularity of $T^*$: this follows from a standard argument in elliptic regularity theory. In doing so, we notice that the right hand side of our necessary condition is globally Lipschitz in $T^*(x)$, and hence any regularity established for $T^*$ also holds for the right hand side of the equation. Using elliptic regularity theory, we can then ``bootstrap'' to infer greater regularity upon $T^*$, a process that we can continue for as many derivatives as $\rho$ possesses. We remark that this regularity stands in contrast to recent results for other nonlinear dimension reduction methods such as one standard variant of multi-dimensional scaling \cite{murray2024probabilistic}: the ramifications of such a regularity result are still a topic of investigation.

\section{Appendix}
\subsection*{Derivation of Perplexity}
Noting equation \eqref{Perp1} and plugging in \eqref{p_j|i}, one finds
\begin{align*}
&\log(\Perp_n(X_i|\sigma))=-\sum_{j} p_{j|i}\log p_{j|i}\\
&=-\sum_{j} \frac{\exp(-|X_i-X_j|^2/2\sigma^2(X_i))}{\sum_{k}\exp(-|X_i-X_k|^2/2\sigma^2(X_i))}\Bigg[-\frac{|X_i-X_j|^2}{2\sigma^2(X_i)}-\log\left(\sum_{k}\exp\left(-\frac{|X_i-X_k|^2}{2\sigma^2(X_i)}\right)\right)\Bigg]\\
&=\frac{\sum_{j} |X_i-X_j|^2/2\sigma^2(X_i)\exp(-|X_i-X_j|^2/2\sigma^2(X_i))}{\sum_{k}\exp(-|X_i-X_k|^2/2\sigma^2(X_i))}+\log\left(\sum_{k}\exp\left(-\frac{|X_i-X_k|^2}{2\sigma^2(X_i)}\right)\right)\\
&\implies \Perp_n(X_i|\sigma)=\left(\sum_{k}\exp\left(-\frac{|X_i-X_k|^2}{2\sigma^2(X_i)}\right)\right)\exp\left(\frac{\sum_{k} |X_i-X_k|^2\exp\left(-\frac{|X_i-X_k|^2}{2\sigma^2(X_i)}\right)}{2\sigma^2(X_i)\sum_{k}\exp\left(-\frac{|X_i-X_k|^2}{2\sigma^2(X_i)}\right)}\right)
\end{align*}
\subsection*{Derivation of Population Model}

Expanding equation \eqref{discrete_functional_cost} we see
\begin{align*}
\mathrm{KL}_n(T)&=\sum_{ij}p_{ij}\log p_{ij}+\sum_{ij}p_{ij}\log(1+|T(X_i)-T(X_j)|^2)\\
&+\sum_{ij}p_{ij}\log\left(\sum_{kl}(1+|T(X_k)-T(X_l)|^2)^{-1}\right).
\end{align*}
Using equation \eqref{p_j|i} and noting the symmetry of $p_{ij}$, one has
\begin{align*}
\mathrm{KL}_n(T)&=\frac{1}{n}\sum_{ij}p_{j|i}\log \left(\frac{p_{j|i}+p_{i|j}}{2n}\right)+\frac{1}{n}\sum_{ij}p_{j|i}\log(1+|T(X_i)-T(X_j)|^2)\\
&+\log\left(\sum_{kl}(1+|T(X_k)-T(X_l)|^2)^{-1}\right).
\end{align*}since $\sum_{ij} p_{ij}=1$ in the final term. Plugging in the form of $p_{j|i}$ defined in  \eqref{p_j|i}, we find
\begin{align*}
&\mathrm{KL}_n(T)\\
&=\frac{1}{n}\sum_{ij}\frac{\exp\Big(\tfrac{-|X_i-X_j|^2}{2\sigma_{i,n}^2}\Big)}{\sum_{k}\exp\Big(\tfrac{-|X_i-X_k|^2}{2\sigma_{i,n}^2}\Big)}\log\left(\frac{1}{2n}\frac{\exp\Big(\tfrac{-|X_i-X_j|^2}{2\sigma_{i,n}^2}\Big)}{\sum_{k}\exp\Big(\tfrac{-|X_i-X_k|^2}{2\sigma_{i,n}^2}\Big)}+\frac{1}{2n}\frac{\exp\Big(\tfrac{-|X_j-X_i|^2}{2\sigma_{j,n}^2}\Big)}{\sum_{k}\exp\Big(\tfrac{-|X_j-X_k|^2}{2\sigma_{j,n}^2}\Big)}\right)\\
&+\frac{1}{n}\sum_{ij}\frac{\exp\Big(\tfrac{-|X_i-X_j|^2}{2\sigma_{i,n}^2}\Big)\log(1+|T(X_i)-T(X_j)|^2)}{\sum_{k}\exp\Big(\tfrac{-|X_i-X_k|^2}{2\sigma_{i,n}^2}\Big)}+\log\left(\sum_{ij}(1+|T(X_i)-T(X_j)|^2)^{-1}\right).
\end{align*}
To obtain a consistent limit as $n\to \infty$ for $\{\sigma_{i,n}\}_{i=1}^n$ which is constant order in $n$, we manipulate further to get 
\begin{align*}
&=\frac{1}{n^2}\sum_{ij}\frac{\exp\Big(\tfrac{-|X_i-X_j|^2}{2\sigma_{i,n}^2}\Big)}{\tfrac{1}{n}\sum_{k}\exp\Big(\tfrac{-|X_i-X_k|^2}{2\sigma_{i,n}^2}\Big)}\Bigg[\log\left(\frac{\tfrac{1}{2}\exp\Big(\tfrac{-|X_i-X_j|^2}{2\sigma_{i,n}^2}\Big)}{\tfrac{1}{n}\sum_{k}\exp\Big(\tfrac{-|X_i-X_k|^2}{2\sigma_{i,n}^2}\Big)}+\frac{\tfrac{1}{2}\exp\Big(\tfrac{-|X_j-X_i|^2}{2\sigma_{j,n}^2}\Big)}{\tfrac{1}{n}\sum_{k}\exp\Big(\tfrac{-|X_j-X_k|^2}{2\sigma_{j,n}^2}\Big)}\right)\\
&-2\log(n)\Bigg]
\\
&+\frac{1}{n}\sum_{ij}\frac{\exp\Big(\tfrac{-|X_i-X_j|^2}{2\sigma_{i,n}^2}\Big)\log(1+|T(X_i)-T(X_j)|^2)}{\sum_{k}\exp\Big(\tfrac{-|X_i-X_k|^2}{2\sigma_{i,n}^2}\Big)}\\
&+\log\left(\frac{1}{n^2}\sum_{ij}(1+|T(X_i)-T(X_j)|^2)^{-1}\right)+2\log(n).
\end{align*}Notice here that the $\log(n)$ terms exactly cancel since $\sum_{ij}p_{ij}=1$. Furthermore, the very first term has no dependence on $T$ and therefore can be dropped in the minimization over $T$.

\bibliographystyle{siamplain}
\bibliography{TSNE-bib}

\end{document}